\documentclass{article}
\usepackage{jones}
\usepackage{fullpage}
\usepackage{tikz}
\usepackage{pgfplots}
\usepgfplotslibrary{groupplots}
\pgfplotsset{compat=1.18}

\title{\Large The Grothendieck Constant is Strictly Larger than Davie--Reeds' Bound}
\author{Chris Jones\\UC Davis\\\href{mailto:chijones@ucdavis.edu}{chijones@ucdavis.edu} \and Giulio Malavolta\\Bocconi University\\\href{mailto:giulio.malavolta@unibocconi.it}{giulio.malavolta@unibocconi.it}}
\date{}

\DeclareMathOperator{\He}{He}

\begin{document}

\maketitle

\begin{abstract}
    The Grothendieck constant $K_G$ is a fundamental quantity in functional analysis, with important connections to quantum information, combinatorial optimization, and the geometry of Banach spaces. Despite decades of study, the value of $K_G$ is unknown. The best known lower bound on $K_G$ was obtained independently by Davie and Reeds in the 1980s. In this paper we show that their bound is not optimal. We prove that $K_G \ge K_{DR}+10^{-12}$, where $K_{DR}$ denotes the Davie–Reeds lower bound.

    Our argument is based on a perturbative analysis of the Davie–Reeds operator. We show that every near-extremizer for the Davie–Reeds problem has $\Omega(1)$ weight on its degree-3 Hermite coefficients, and therefore introducing a small cubic perturbation increases the integrality gap of the operator.
\end{abstract}

\section{Introduction}

The (real) \emph{Grothendieck constant} is defined by
\begin{equation}\label{eq:kg-ratio}
    K_G := \sup_{n \in \N}\sup_{A \in \R^{n \times n}} \frac{\displaystyle\max_{\substack{\norm{x_i}_2 = 1, \norm{y_j}_2 = 1}}\sum_{i,j = 1}^n A_{ij} \ip{x_i, y_j}}{\displaystyle\max_{\substack{x_i \in \{\pm1\}, y_j \in \{\pm1 \}}} \sum_{i,j = 1}^n A_{ij} x_i y_j}\,.
\end{equation}

Originally studied by Grothendieck \cite{grothendieck1956resume} in the context of functional analysis,
the constant $K_G$ also plays an important role in quantum mechanics and computer science. For instance, $K_G$ characterizes the maximal violation of Bell inequalities for 2-player XOR games via the celebrated theorem of Tsirelson \cite{tsirelson1980quantum}. In theoretical computer science, 
\cref{eq:kg-ratio} is interpreted as the integrality gap of a certain semidefinite program (SDP),
which has intimate connections to approximation algorithms \cite{alon2004approximating}, regularity lemmas \cite{szemeredi1975regular,frieze1996regularity}, and the Unique Games conjecture \cite{raghavendra2009towards,khot2011grothendieck}. We do not attempt to list all connections of $K_G$ here and instead we refer the reader to the surveys by Pisier \cite{pisier2012grothendieck} and by Khot and Naor \cite{khot2011grothendieck}.

The exact value of $K_G$ is still unknown. The best current numerical bounds are approximately $1.6769 \leq K_G \leq 1.7823$.
The upper bound $K_G \leq \frac{\pi}{2\ln(1+\sqrt{2})} \approx 1.7823$ was proven by Krivine \cite{krivine1977sur}.
While this is the best upper bound on $K_G$ numerically speaking, in a breakthrough work, Braverman et al.\ \cite{braverman2013grothendieck} showed that Krivine's bound could be lowered by a positive constant $\eps > 0$ (the $\eps$ coming out of their proof is so minuscule that they choose not to state an explicit numerical value).

The lower bound $K_G \geq K_{DR} \approx 1.6769$ was proven independently by Davie \cite{davie1984lower} and Reeds \cite{reeds1991new}.
Analogous to the work of Braverman et al.\ relative to the Krivine bound,
our contribution is to give a small but positive improvement to the Davie--Reeds bound, proving that the lower bound for $K_G$ can be improved by a constant $\eps$.
This marks the first movement of the $K_G$ lower bound since the 1980s.

\begin{theorem}\label{thm:new-lb}
    $K_G \geq K_{DR} + 10^{-12}$.
\end{theorem}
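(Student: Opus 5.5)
We follow the strategy sketched in the abstract: view the Davie--Reeds construction as a kernel operator on Gaussian space, and show that a small cubic perturbation of that operator strictly increases the integrality gap. In the continuum formulation, $K_G$ is at least the ratio of the vector value to the integral value of any kernel $A(x,y)$ on $\R^n\times\R^n$ under Gaussian measure, taking $n\to\infty$. The Davie--Reeds kernel is
\[A_\lambda(x,y)=\ip{x,y}/n - \lambda\,\mathbf{1}[x=y].\]
More precisely, it is a linear-minus-identity form. Its vector value is attained by $x\mapsto x/\norm{x}$. Its integral value is $\sup_f\, \norm{\E[f(g)g]}^2-\lambda\E f^2$ over $f:\R^n\to[-1,1]$. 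By rotational symmetry one may take $f(g)=h(g_1)$ for a one-dimensional function $h:\R\to[-1,1]$, so the integral value becomes a one-dimensional problem in $h$. The extremizer is a truncated linear function $h(t)=\mathrm{clip}(ct)$, and optimizing over $\lambda$ gives $K_{DR}$.

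\textbf{Perturbation.} Replace $A_\lambda$ by
\[A_{\lambda,\delta}=A_\lambda+\delta B,\]
where $B$ couples degree-1 and degree-3 Hermite components: $B(x,y)$ is a normalized cubic form, for example $\ip{x,y}\He_3$-type terms, or a term pairing $\He_3(\ip{u,x})$ with $\ip{u,y}$ averaged over directions $u$. We choose $B$ so that its vector value is first-order invariant. Concretely, the vector value at the Davie--Reeds vector solution $x/\norm{x}$ should increase by at least $\delta\,\beta_1$ for an explicit constant $\beta_1$, computable from Hermite coefficients of $x/\norm{x}$ and of $\mathrm{sign}$-type functions. By contrast, the integral value should increase by at most $\delta\,\beta_0+O(\delta^2)$ with $\beta_0<\beta_1$.

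\textbf{Main step: stability of near-extremizers.} We must show that every $f$ achieving integral value within $\eta$ of optimal has $\Omega(1)$ mass on degree-3 Hermite coefficients, with the specific structure of the clipped-linear extremizer. This is what makes $\beta_0$ computable and smaller than $\beta_1$. The plan has three parts.
\begin{enumerate}
\item Reduce $f$ to its conditional expectation $h$ along the direction $v=\E[f(g)g]$. This loses nothing in the degree-1 term and only decreases $\E f^2$, and it is also the only way $f$ can interact with $B$ to first order.
\item Prove a quantitative stability inequality for the one-dimensional problem. Writing $c=\E[h(t)t]$, we aim for
\[c^2-\lambda\E h^2\le \mathrm{OPT}-\kappa\,\E\bigl(h-h^*\bigr)^2\]
for an explicit $\kappa>0$. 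This should follow from strict concavity of the pointwise Lagrangian $ct\,h-\lambda h^2$ on $[-1,1]$ together with a Gaussian-tail estimate near the clipping point.
\item Conclude that the degree-3 Hermite coefficient of any near-extremizer is within $O(\sqrt{\eta})$ of that of $h^*$, which is explicitly nonzero. Hence the first-order gain in the integral value is bounded by $\delta(\beta_0+O(\sqrt\eta))$, while non-near-extremizers lose $\eta$ outright.
\end{enumerate}

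\textbf{Conclusion and constants.} Balancing $\eta\approx\delta$, the integral value of $A_{\lambda,\delta}$ is at most $\mathrm{OPT}_I+\delta\beta_0+O(\delta^{3/2})$, while the vector value is at least $\mathrm{OPT}_V+\delta\beta_1$. The gap ratio therefore increases by $\Omega(\delta)$ for small enough $\delta$. The main obstacle is making every constant rigorous and explicit: the stability constant $\kappa$, the sign and size of $\beta_1-\beta_0$ (which involves integrals of clipped and normalized Gaussians, to be evaluated by interval arithmetic), and the $O(\delta^{3/2})$ error terms. Tracking these with crude but explicit bounds, and choosing $\delta$ around $10^{-6}$, should yield a gain of $10^{-12}$, proving \cref{thm:new-lb}.
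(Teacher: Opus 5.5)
There is a genuine gap, and it is in your setup: you are analyzing the wrong classical problem. Because $A_\lambda=\bm{\Pi}_1-\lambda\bI$ is indefinite, $\val(A_\lambda)=\sup_{f,g}\E\,(A_\lambda f)g$ is a bilinear problem over \emph{pairs} of $\pm1$-valued functions. It is strictly larger than the symmetric quantity $\sup_{|f|\le1}\|\E[f(X)X]\|^2-\lambda\E f^2$ you use. One always has $\langle A_\lambda f,f\rangle\le\val(A_\lambda)$, but not conversely. Numerically, at $\lambda^*$ your quantity (maximized by the clipped-linear functions you describe) is about $0.452$, whereas $\val(A_{DR})\approx0.4786$. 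At $\lambda=0.3$ your ratio $(1-\lambda)/\sup_{|f|\le1}\langle A_\lambda f,f\rangle$ is about $1.9$, above Krivine's upper bound. So your formulation neither reproduces $K_{DR}$ nor gives a valid lower bound on $K_G$. The correct analysis rotates so that $\bm{\Pi}_1 f$ is a multiple of $X_1$ and uses $\E[X_1f]\,\E[X_1g]\le(\E[X_1F])^2$ with $F=(f+g)/2\in\{-1,0,1\}$ and $\E fg=2\gamma(F\neq0)-1$. This yields a bathtub problem whose extremizers are the strip functions, not clipped-linear functions: $f=g=\operatorname{sign}(X_1)$ on $|X_1|\ge C^*$ and $g=-f$ on the strip, with the only constraint $\bm{\Pi}_1h=0$ for $h=f\mathbf{1}_{|X_1|\le C^*}$.

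Your steps 1--3 inherit this error. The extremizers form an infinite-dimensional family. So there is no single $h^*$, no quadratic stability inequality around it, and no well-defined degree-3 coefficient of $h^*$. For $\pm1$-valued near-extremizers one can only show proximity to \emph{some} strip function, and only at rate $\eta^{1/4}$ in $L^2$ (bathtub stability plus stability of the one-variable maximizer $C^*$). Step 1 also destroys what you need: the strip part $h$ can carry degree-3 mass in directions orthogonal to $X_1$, which $\bm{\Pi}_3$ sees but conditioning on $X_1$ erases.

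The missing idea is a bound that is uniform over the whole family. Put $u=\operatorname{sign}(X_1)\mathbf{1}_{|X_1|\ge C^*}$, $f=u+h$, $g=u-h$; then $\E(\bm{\Pi}_3f)(\bm{\Pi}_3g)=\|\bm{\Pi}_3u\|_2^2-\|\bm{\Pi}_3h\|_2^2$. Since $h$ lives on a strip of measure about $0.2018$ on which $\He_i(x)^2/i!\le1$ for $i\le3$, Cauchy--Schwarz gives $\|\bm{\Pi}_3h\|_2^2\le0.2018^2\approx0.0408<0.0868\approx\|\bm{\Pi}_3u\|_2^2$.

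Your perturbation $B$ is also never pinned down. A coupling with no degree-1-to-degree-1 block cannot raise the SDP value at first order: the SDP value is at most the operator norm, whose top eigenspace is the degree-1 space, and such a $B$ has zero quadratic form there. So your $\beta_1$ would be $0$, and all the gain has to come from the classical side. The paper arranges exactly this with the Hermite-diagonal choice $-\varepsilon\bm{\Pi}_3$. The SDP value stays exactly $1-\lambda^*$, and every near-optimal classical value drops by about $\varepsilon\,\E(\bm{\Pi}_3f)(\bm{\Pi}_3g)\ge0.046\,\varepsilon$, up to an $O(\varepsilon^{5/4})$ error from the $\eta^{1/4}$ stability.
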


\paragraph{Concurrent work.}
In a concurrent work, Heilman \cite{heilman2026lower} has proven $K_G \geq K_{DR} + 10^{-26}$ using the same perturbation strategy that we use here, with a different analysis.

\subsection{What are hard instances for the Grothendieck problem?}

To prove lower bounds on $K_G$, we need to identify matrices $A \in \R^{n \times n}$ that maximize the ratio in \cref{eq:kg-ratio}.
Prior literature on $K_G$ has identified a family of interesting instances which we call \emph{Hermite projection games}.
The ``matrix'' in a Hermite projection game is actually a linear operator on the space
of functions $\R^n \to \R$
(which can be discretized into a finite-dimensional matrix at the very end).
A Hermite projection game is defined to be
a linear combination of the operators $\bm{\Pi}_d$ where $\bm{\Pi}_d$ projects a function to the degree-$d$ part of its Hermite polynomial expansion.
Note that we take the limit $n \to \infty$ in a Hermite projection game,
but the coefficients on the matrices $\bm{\Pi}_d$ are constant with respect to $n$.

For example, $\bm{\Pi}_1$ achieves value $\frac{\pi}{2}$ which is known to be the largest possible value of \cref{eq:kg-ratio} among PSD matrices, the so-called ``little Grothendieck inequality''.
The Davie--Reeds operator which achieves the best lower bound on $K_G$ to date is a Hermite projection game,
\[
    A_{DR} = \bm{\Pi}_1 - \lam^* \bI, \qquad \lam^* \approx 0.19748\,.
\]
A remarkable result of Raghavendra and Steurer shows that Hermite projection games are complete for $K_G$: for every $\eps > 0$, there exists a Hermite projection game such that \cref{eq:kg-ratio} is at least $K_G - \eps$, regardless of the true value of $K_G$ \cite{raghavendra2009towards}.

Our improvement to the Davie--Reeds construction is also a Hermite projection game, specifically
\[
    A = \bm{\Pi}_1 - \lam^* \bI - \eps \bm{\Pi}_3
\]
for a small constant $\eps > 0$.
Let us give some intuition for why we choose this perturbation of the Davie--Reeds operator using the viewpoint of $K_G$ through nonlocal games.

The word ``game'' refers to the interpretation of \cref{eq:kg-ratio} as the gap between the optimal quantum and classical strategies for a 2-player XOR game.
This type of game is parameterized by a matrix $A \in \R^{n \times n}$.
There are two players, Alice and Bob, who separately receive an index $i_A, i_B \in [n]$ such that the probability of receiving pair $(i_A, i_B)$ is proportional to $|A_{i_Ai_B}|$.\footnote{We can assume without loss of generality that the matrix $A$ is normalized so that $\sum_{i,j =1}^n |A_{ij}| =1$, since one can easily see from \cref{eq:kg-ratio} that $K_G$ is scale invariant.} 
Their goal is to output bits $a,b \in \{\pm1\}$ such that $a b = \sign(A_{i_Ai_B})$.

If no communication is allowed between Alice and Bob, their optimal strategy has bias (i.e., probability of winning minus probability of losing) exactly equal to the denominator of \cref{eq:kg-ratio}.
When Alice and Bob have access to a shared quantum state, instead the bias of their optimal strategy is exactly the numerator of \cref{eq:kg-ratio}.
Therefore, $K_G$ is equal to the maximum quantum-versus-classical advantage across all possible 2-player XOR games.
The famous \emph{CHSH game} is a simple example with $n=2$ showing $K_G > 1$ (there exists a quantum strategy which is strictly better than all possible classical strategies; this is known as a ``Bell inequality'').

Consider three different 2-player XOR games which are Hermite projection games:
\begin{enumerate}
    \item Alice and Bob receive $n$-dimensional vectors $X,Y$ with probability proportional to $e^{-\norm{X}_2^2/2 - \norm{Y}_2^2/2}|\ip{X,Y}|$. Their goal is to output $ab = \sign(\ip{X,Y})$.
    \item Alice and Bob both receive the same $n$-dimensional Gaussian vector $X$. Their goal is to output $ab = -1$.
    \item Alice and Bob play game 1 with probability $1-p$ or game 2 with probability $p$.
\end{enumerate}

The operator corresponding to the first game is $\bm{\Pi}_1$. The operator corresponding to the second game is $-\bI$. The operator corresponding to the third game is the (rescaled) Davie--Reeds operator $(1-p)\bm{\Pi}_1 - p\bI$.

The optimal classical strategy for the first game is to output $a = \sign(X_1)$ and $b = \sign(Y_1)$. The second game is trivially winnable with probability 1. The third game, however, turns out to be strictly harder than game 1 (which is good since this leads to a larger lower bound for $K_G$).
At the intuitive level, this is because we have confused Alice and Bob by ``negating'' the correct answer with probability $p$: when $X$ and $Y$ are correlated vectors,
we could be in game 1, in which case the goal of Alice and Bob is to answer $ab = 1$,
or we could be in game 2, in which case Alice and Bob are supposed to answer $ab = -1$ instead.

Extending this intuition,
we should confuse Alice and Bob even more by introducing additional alternations.
For example, a candidate harder game would be $ab = \sign\left(f\left(\frac{\ip{X,Y}}{\norm{X}\norm{Y}}\right)\right)$ for a function $f: [-1,1] \to \R$ that oscillates,
since Alice and Bob would need to know the angle between $X$ and $Y$ very precisely in order to win this game.
The work of König \cite{konig2001extremal}
suggests a concrete instance of this type, although its analysis remains an open problem.
The candidate instance is an operator on functions $\R^n \to \R$ whose ``entries'' are (more precisely, the kernel function of the operator is)
\[
    A_G(X,Y) = e^{-\norm{X}_2^2/2 - \norm{Y}_2^2/2}\sin \ip{X, Y}\,.
\]

$A_G$ is similar to a Hermite projection game but it is not quite one.
If we write the Hermite polynomial expansion of $\sin \ip{X,Y}$
\[
    \sin \ip{X,Y} = \sum_{\al, \beta \in \N^n} c_{\al\beta} \He_\al(X)\He_\beta(Y)
\]
then, in $A_G$, the terms with $\alpha = \beta$ are a Hermite projection game,
while the terms with $\alpha \neq \beta$ are additional ``off-diagonal matrices'' with respect to the Hermite polynomial decomposition.

It can be shown that the Hermite coefficients $c_{\al\al}$ are zero unless $|\al|$ is odd
and they alternate sign for $|\al| = 1,3,5,7,\dots$
This motivates us to consider Hermite projection games of the form $\bm{\Pi}_1 - c_3 \bm{\Pi}_3 + c_5 \bm{\Pi}_5 + \cdots$.
We propose to bridge the gap from the instances $\bm{\Pi}_1$ and $\bm{\Pi}_1 - \lam^* \bI$ to a candidate hard instance such as $A_G$
by steadily adding more low-degree Hermite projectors $\bm{\Pi}_k$ with alternating coefficients.
While the quantitative improvement in \cref{thm:new-lb} is small, the proof of \cref{thm:new-lb} confirms the intuition behind this approach, showing that an additional alternation $\bm{\Pi}_3$ indeed makes the Davie--Reeds game harder.

\section{Preliminaries}

We start with some preliminaries on Hermite projection games.

\newcommand{\SDP}{\mathrm{sdp}}
\let\sdp\SDP

For a measure space $(\Om, \mu)$ and a bounded linear operator $A : L^\infty(\mu) \to L^1(\mu)$,
define the quantities:
\begin{align*}
\val(A) &= \sup_{f, g : \Om \to \{\pm 1\}} \int_\Om (Af)(X) g(X)\, d\mu(X) \\
\SDP(A) &= \sup_{\calH} \sup_{\substack{f,g: \Om\to \mathcal H\\ \|f(X)\|_{\mathcal H}= 1, \|g(X)\|_\calH = 1}} \int_\Om \langle(Af)(X), g(X)\rangle_{\mathcal H} \, d\mu(X)
\end{align*}
where the supremum over $\calH$ is over real Hilbert spaces, and we lift $A$ to an operator on $f : \Om \to \calH$ by applying it separately to each coordinate of $\calH$.
More precisely, select an orthonormal basis $v_i$ for $\calH$, represent $f(X) = \sum_{i} f_i(X) v_i$, and define $Af = \sum_{i} (Af_i)(X) v_i$.
The case where $\mu$ is the discrete counting measure on $\{1,2,\dots, n\}$
corresponds to finite-dimensional matrices $A \in \R^{n \times n}$.

The boundedness of the operator $A: L^\infty(\mu) \to L^1(\mu)$ is exactly equivalent to $\val(A) < \infty$.
On the other hand, while $\sdp(A)$ could ostensibly be infinite, the Grothendieck inequality states that $\sdp(A)$ is in fact finite
and only a constant factor larger than $\val(A)$.

\begin{theorem}[{\cite[Theorems 2.4 and 2.5]{pisier2012grothendieck}}]
\label{lem:grothendieck-operator}
    Let $(\Om, \mu)$ be a measure space and let $A : L^\infty(\mu) \to L^1(\mu)$ be a bounded linear operator.
    Then there is a constant $K$ such that
    \[
        \sdp(A) \leq K \val(A)
    \]
    Furthermore, the best constant $K$ is equal to $K_G$ defined in \cref{eq:kg-ratio}.
\end{theorem}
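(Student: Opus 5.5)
The argument has two directions. First I show that $K = K_G$ suffices for every measure space, by reducing to finite matrices. Then I show that no smaller constant works, which is immediate from the counting-measure case.

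\emph{Reduction to finite matrices.} Fix $\calH$ and unit-vector-valued $f,g$. Bounded measurable maps into the unit sphere of $\calH$ can be approximated, in the sense needed for the pairing $\int\langle Af,g\rangle\,d\mu$, by simple functions. Concretely:
\begin{itemize}
\item Project $f,g$ onto a finite-dimensional subspace $\R^k$ of $\calH$ and renormalize. Continuity of the pairing under $L^\infty$ perturbations follows from the boundedness of $A:L^\infty\to L^1$, applied coordinatewise. Making this precise requires care, for example by working with the bilinear form $B(u,v)=\int (Au)v\,d\mu$ on $L^\infty\times L^\infty$.
\item Approximate the resulting $S^{k-1}$-valued maps uniformly by maps taking finitely many values $x_1,\dots,x_n$ and $y_1,\dots,y_m$ on measurable partitions $\{E_i\}$ and $\{F_j\}$.
\end{itemize}
Once $f=\sum_i x_i 1_{E_i}$ and $g=\sum_j y_j 1_{F_j}$, the SDP value equals $\sum_{i,j} M_{ij}\langle x_i,y_j\rangle$ with $M_{ij}=B(1_{E_i},1_{F_j})$. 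Applying \cref{eq:kg-ratio} to $M$ (padded to a square matrix) bounds this by $K_G\max_{\pm1}\sum M_{ij}\epsilon_i\delta_j$. Every such $\pm1$ choice corresponds to functions $\sum\epsilon_i 1_{E_i}$ and $\sum_j\delta_j1_{F_j}$ with values in $\{\pm1\}$, so the right-hand side is at most $K_G\val(A)$. Letting the approximation error go to zero gives $\sdp(A)\le K_G\val(A)$.

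One subtlety is that \cref{eq:kg-ratio} as written has a denominator that could be zero or negative. I handle this by noting that $\val(M)\ge0$, since replacing $g$ by $-g$ negates the sum. When $\val(M)=0$, I need $\sdp(M)=0$; this follows by perturbing $M$ or from the finite Grothendieck inequality, and I treat the definition as the usual supremum over matrices with positive denominator.

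\emph{Optimality.} Taking $\mu$ to be counting measure on $\{1,\dots,n\}$, $\sdp(A)$ and $\val(A)$ are exactly the numerator and denominator in \cref{eq:kg-ratio}. Here $n$-dimensional $\calH$ suffices, and larger $\calH$ gives nothing more because only $2n$ vectors are involved. So any valid $K$ must be at least $K_G$.

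The main obstacle is the approximation step, for two reasons. First, $A$ is only bounded $L^\infty\to L^1$, so there is no strong continuity available. Second, $\calH$ may be infinite-dimensional, so vector-valued $L^\infty$ functions need not be separably valued. I would handle this by working with the bilinear form and Bochner-measurability, and if necessary by restricting to strongly measurable $f,g$ as implicitly intended in the definition of $\sdp(A)$. Since the statement is cited from Pisier, I could alternatively defer entirely to \cite[Theorems 2.4 and 2.5]{pisier2012grothendieck}.
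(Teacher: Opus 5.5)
The paper does not prove this statement; it imports it from Pisier. Your route is the standard derivation of the $L^\infty$ form from the matrix form: discretize to simple functions, apply the matrix definition of $K_G$ to $M_{ij}=B(\mathbf 1_{E_i},\mathbf 1_{F_j})$, and use counting measure for optimality. The matrix step and the optimality half are fine. But the step you flag as the crux fails as written.

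Projecting a sphere-valued $f$ onto a finite-dimensional subspace and renormalizing is not an $L^\infty$-small perturbation, because the mass of $f(X)$ outside the subspace is not uniformly small in $X$. Boundedness $L^\infty\to L^1$ gives no continuity under weaker convergence. A concrete counterexample:
\begin{itemize}
\item Take $\Omega=\N$, $\mu(\{n\})=2^{-n}$, and $Au=\mathrm{LIM}(u)\,\mathbf 1$ for a Banach limit.
\item Take $f(n)=\frac1n e_1+\sqrt{1-n^{-2}}\,e_{n+1}$ and $g\equiv e_1$.
\item Every coordinate of $f$ tends to $0$, so $Af=0$ and the pairing is $0$.
\item After projecting onto $\mathrm{span}(e_1,\dots,e_k)$ and renormalizing, $f$ becomes $e_1$ on $\{n\ge k\}$, so the pairing equals $1$ for every $k$.
\end{itemize}
Restricting to Bochner-measurable $f$ does not help either: $n\mapsto e_n$ is Bochner measurable but is not a uniform limit of simple functions.

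The missing idea is that no renormalization is needed. The lifted pairing is computed coordinatewise as $\sum_i B(f_i,g_i)$. For a finite set $F$ of coordinates, $(f_i)_{i\in F}$ and $(g_i)_{i\in F}$ take values in the compact unit ball of $\R^F$. So they are uniform limits of simple ball-valued maps, with error at most $2|F|\delta\,\|B\|$ by $L^\infty$-continuity of $B$ in each of finitely many coordinates.

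The matrix inequality also holds for vectors in the unit ball, because the form is linear in each $x_i$ and each $y_j$ separately. Alternatively, pad with $\sqrt{1-\|x\|^2}$ and $\sqrt{1-\|y\|^2}$ in two fresh orthogonal directions. This gives $\sum_{i\in F}B(f_i,g_i)\le K_G\val(A)$.

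Now apply this with $g_i$ replaced pointwise by $\mathrm{sgn}((Af_i)g_i)\,g_i$, which is still ball-valued. You get $\int\sum_{i\in F}|(Af_i)g_i|\,d\mu\le K_G\val(A)$. This makes the full sum converge absolutely, justifies exchanging sum and integral in the definition of the lifted pairing, and yields $\sdp(A)\le K_G\val(A)$.

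Two smaller points. Your reduction only transfers the inequality from matrices. The assertion that \emph{some} finite constant exists is exactly $K_G<\infty$, i.e.\ the matrix Grothendieck inequality, which you use silently and should cite explicitly (e.g.\ Krivine's bound quoted in the introduction). Also, the case $\val(M)=0$ needs no perturbation. Oddness in the signs forces $\sum_{i,j}M_{ij}\epsilon_i\delta_j\equiv 0$ for all sign vectors, hence $M=0$.
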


Specializing to the case of Gaussian measure,
let $n \in \N$ and let $\gam = \gam^{(n)}$ be the $n$-dimensional standard Gaussian measure.
The space $L^2(\gam)$ has the Gaussian inner product $\ip{f,g}_\gam = \E_{X \sim \gam} f(X)g(X)$ and the corresponding norm $\norm{f}_2 = \left(\E_{X \sim \gam} f(X)^2\right)^{1/2}$.
For a multi-index $\al \in \N^n$, let $\He_\al(X)$ be the $\al$ Hermite polynomial (probabilists' convention)
which together form an orthogonal basis for $L^2(\gam)$.
For $\al \in \N^n$ define the notation $\al! = \prod_{i = 1}^n \al_i!$ and $|\al| = \sum_{i=1}^n \al_i$.

Let $\bm{\Pi}_k:L^2(\gam)\to \mathrm{span}\{\He_\alpha:|\alpha|=k\}$ be the projection operator to the degree-$k$ Hermite polynomials.
Our candidate integrality gap instances for the Grothendieck constant are
\emph{Hermite projection games} of the form
\[
    A = \sum_{k = 0}^\infty c_k \bm{\Pi}_k
\]
for $\sup_{k \in \N} |c_k| < \infty$.
The SDP value of a Hermite projection game has a simple formula:
it is just equal to the spectral norm of $A$.

\begin{proposition}\label{lem:sdp-value}
    Let $A = \sum_{k = 0}^\infty c_k \bm{\Pi}_k$ be a Hermite projection game. Then
    \[\lim_{n \to \infty} \SDP(A) = \sup_{k \in \N} |c_k|\,.\]
\end{proposition}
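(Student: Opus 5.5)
The proof has two halves: an upper bound valid for every $n$, and a matching lower bound as $n \to \infty$.

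\emph{Upper bound.} Fix $n$ and $\calH$, and take $f,g:\R^n\to\calH$ with unit-norm values. Choose an orthonormal basis $(v_i)$ of $\calH$ and write $f=\sum_i f_i v_i$, $g=\sum_i g_i v_i$. Then
\[
\int \ip{(Af)(X),g(X)}\,d\gam(X) = \sum_i \ip{Af_i,g_i}_\gam .
\]
The operator $A$ is self-adjoint on $L^2(\gam)$ with operator norm $\sup_k|c_k|$, since the projections $\bm{\Pi}_k$ are orthogonal. Applying Cauchy--Schwarz first for each $i$ and then over the sum gives
\[
\sum_i \ip{Af_i,g_i}_\gam \le \sup_k|c_k|\Bigl(\sum_i\norm{f_i}_2^2\Bigr)^{1/2}\Bigl(\sum_i\norm{g_i}_2^2\Bigr)^{1/2}.
\]
Both sums equal $\E\norm{f(X)}^2_\calH=\E\norm{g(X)}^2_\calH=1$. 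Hence $\SDP(A)\le\sup_k|c_k|$ for every $n$.

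\emph{Lower bound.} Fix $k$; by negating $g$ it suffices to achieve $|c_k|$ approximately. The plan is to find unit-vector-valued $f:\R^n\to\calH$ with $\E\norm{\bm{\Pi}_k f}^2\ge 1-o(1)$ as $n\to\infty$. Set $g=\pm f$. Then $\ip{Af,f}=\sum_j c_j\norm{\bm{\Pi}_jf}^2$, which is $c_k(1-o(1))+O(\sup|c|\cdot o(1))$.

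For the construction, take $\calH$ to be the span of orthonormal degree-$k$ Hermite polynomials. Concretely, $\calH=(\R^n)^{\otimes k}$ restricted to symmetric tensors, and
\[
f(X)=\frac{h_k(X)}{\norm{h_k(X)}},
\]
where $h_k(X)$ is the tensor of Hermite polynomials, i.e.\ $\He_k$ applied to $X$ as a symmetric tensor. The coordinates of $h_k$ lie exactly in degree $k$. By concentration, $\norm{h_k(X)}^2$ concentrates around its mean; for $k=1$ this is $\norm{X}^2\approx n$. So $f$ equals $h_k/\sqrt{\E\norm{h_k}^2}$ up to an error that is $o(1)$ in $L^2(\gam;\calH)$. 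Therefore $\E\norm{\bm{\Pi}_kf}^2\ge(1-o(1))^2$. The case $k=0$ is handled by the constant function $f\equiv v_1$.

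\emph{Main obstacle.} The hard step is the concentration of $\norm{h_k(X)}^2$ for general $k$. This is a degree-$2k$ polynomial, and we need its variance divided by its squared mean to tend to $0$ as $n\to\infty$. I would verify this through the explicit formula for this quantity as a function of $\norm{X}^2$. Rotational invariance makes $\norm{h_k(X)}^2$ a polynomial in $\norm{X}^2$ whose leading term is $\norm{X}^{2k}$, with lower-order terms of relative size $O(1/n)$. The concentration of $\norm{X}^2/n$ then finishes the argument.

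Finally, the supremum over $k$ is handled by taking $k$ with $|c_k|$ within $\delta$ of $\sup_j|c_j|$ and letting $\delta\to0$. Note that $\SDP(A)$ is nondecreasing in $n$, since functions on $\R^n$ embed into functions on $\R^{n+1}$ and the $\bm{\Pi}_k$ commute with this embedding. So the limit exists and equals $\sup_k|c_k|$.
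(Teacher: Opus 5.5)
Your proof is correct and has the same skeleton as the paper's. The upper bound is operator norm plus Cauchy--Schwarz. The lower bound normalizes a vector-valued map with degree-$k$ Hermite coordinates whose pointwise norm concentrates, then takes $g=\pm f$. The genuine difference is the test embedding, and it changes where the work lies.

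The paper uses only the multilinear monomials, $\Psi(x)=\binom{n}{k}^{-1/2}\sum_{|S|=k}\bigl(\prod_{i\in S}x_i\bigr)e_S$. Up to scaling, these are the distinct-index entries of your Hermite tensor $h_k$ (entries $\He_\alpha(X)$, $|\alpha|=k$). They carry no lower-order Hermite corrections, so $\norm{\Psi(X)}_\calH^2$ is an average of products of independent $X_i^2$ and concentrates by a two-line Efron--Stein argument.

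Your full tensor buys rotation equivariance, so $\norm{h_k(X)}^2$ is a polynomial in $r=\norm{X}^2$. But the step you only assert is the one that needs proof. Rotation invariance gives $\norm{h_k(X)}^2=\sum_{j\le k}a_j(n)\,r^j$ but says nothing about how the $a_j(n)$ grow with $n$ (e.g.\ $\norm{h_2(X)}^2=r^2-2r+n$), and the concentration depends on that growth. Your claim is true. Expand
\[
h_k(x)_{i_1\dots i_k}=\sum_M(-1)^{|M|}\prod_{\{a,b\}\in M}\delta_{i_ai_b}\prod_{c\notin V(M)}x_{i_c}
\]
over partial matchings $M$ of $[k]$, where $V(M)$ is the set of matched indices. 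The $(M,M')$ term of $\norm{h_k(x)}^2$ is then $\pm r^p n^q$, where $p$ and $q$ count the path and cycle components of $M\cup M'$. Cycles use at least two vertices, so $p+2q\le k$. Hence every term other than the $r^k$ from $M=M'=\emptyset$ has $p+q\le k-1$, which is exactly your relative-$O(1/n)$ claim. Each such term has second moment $O_k(n^{2k-2})$. Together with $\mathrm{Var}(r^k)=O_k(n^{2k-1})$ and $\E\norm{h_k(X)}^2=k!\binom{n+k-1}{k}=\Theta_k(n^k)$, this gives relative variance $O_k(1/n)$ and completes your argument.

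Your monotonicity-in-$n$ remark is valid but unnecessary. The uniform upper bound and the $\liminf$ lower bound already force the limit to exist.
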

\begin{proof}
We first prove the upper bound.
Let $L^2(\gam; \calH)$ denote the space of functions $f : \R^n \to \calH$ such
that $\E_{X \sim \gam} \norm{f(X)}_\calH^2 < \infty$.
The degree-\(k\) Hermite subspaces are orthogonal in this space, so
\[
\|Af\|_{L^2(\gamma;\mathcal H)}^2
=
\sum_{k\ge 0} c_k^2 \|\bm{\Pi}_kf\|_{L^2(\gamma;\mathcal H)}^2
\le
\left(\sup_{k\ge 0}|c_k|\right)^2\sum_{k\ge 0}\|\bm{\Pi}_k f\|_{L^2(\gamma;\mathcal H)}^2
=
\left(\sup_{k\ge 0}|c_k|\right)^2\|f\|_{L^2(\gamma;\mathcal H)}^2.
\]
For any $f: \R^n \to \calH$ such that \(\|f(X)\|_{\mathcal H}\le 1\), we have \(\|f\|_{L^2(\gamma;\mathcal H)}\le 1\). Therefore
\[
\E_{X \sim \gamma}\left\|Af(X)\right\|_{\mathcal H}
\le
\|Af\|_{L^2(\gamma;\mathcal H)}
\le \sup_{k\in \N}|c_k|.
\]
This gives an upper bound for $\sdp(A)$ since it holds that
\[\SDP(A) = \sup_{\calH} \sup_{\substack{f,g: \Om\to \mathcal H\\ \|f(X)\|_{\mathcal H}\le 1, \|g(X)\|_\calH \le 1}} \E_{X \sim \gam} \langle(Af)(X), g(X)\rangle_{\mathcal H} = \sup_{\calH} \sup_{\substack{f: \Om\to \mathcal H\\ \|f(X)\|_{\mathcal H}\le 1}} \E_{X \sim \gam} \norm{(Af)(X)}_{\mathcal H}\,.\]

For the lower bound, fix some constant $k$. For \(n\ge k\), let \(\mathcal H\) be the Hilbert space with orthonormal basis
\(\{e_S:S\subseteq[n],\,|S|=k\}\), and define
\[
\Psi(x):=\binom{n}{k}^{-1/2}\sum_{|S|=k}\left(\prod_{i\in S}x_i\right)e_S.
\]
Notice that 
\[
\norm{\Psi}_{L^2(\gamma; \mathcal{H})}^2 = \E_{X\sim \gamma} \|\Psi(X)\|_{\mathcal H}^2 = 
\E_{X\sim \gamma} \binom{n}{k}^{-1}\sum_{|S|=k}\prod_{i\in S}X_i^2
=
\binom{n}{k}^{-1}\sum_{|S|=k}\prod_{i\in S}\E X_i^2
= 1
\]
by independence. Since $\norm{\Psi(X)}_{\mathcal H}^2$ is the average of $\binom{n}{k}$ monomials and changing one coordinate affects only $k/n = O_k(1/n)$ fraction of them, the Efron–Stein inequality yields \(\Var(\norm{\Psi(X)}_{\mathcal H}^2)=O_k(1/n)\). Thus
\begin{equation} \label{eq:converge}
\E_{X\sim \gamma}\left(\norm{\Psi(X)}_{\mathcal H}-1\right)^2\le  \E_{X\sim \gamma}\left(\norm{\Psi(X)}_{\mathcal H}^2-1\right)^2\to 0.
\end{equation}
That is to say, $\norm{\Psi(X)}_\calH$ converges to 1 in $L^2$.
Now define
\[
f(x):=
\begin{cases}
\frac{\Psi(x)}{\norm{\Psi(x)}_{\mathcal H}} & \norm{\Psi(x)}_{\mathcal H}\neq 0,\\[1mm]
0 & \norm{\Psi(x)}_{\mathcal H}=0.
\end{cases}
\]
Then $f$ is a feasible solution for the SDP. Moreover, $\norm{f-\Psi}_{L^2(\gamma;\mathcal H)}^2 = \E_{X\sim \gamma}(\norm{\Psi(X)}_{\mathcal H}-1)^2$ converges to $0$ by \cref{eq:converge}, which implies that the Hermite weights of $f$ converge to those of $\Psi$:
\begin{equation}\label{eq:important}
    \|\bm{\Pi}_kf\|_{L^2(\gamma;\mathcal H)}^2 \to \norm{\bm{\Pi}_k\Psi}_{L^2(\gamma;\calH)}^2 = \norm{\Psi}_{L^2(\gamma;\calH)}^2 = 1
\qquad\text{and} \qquad
\sum_{j\neq k}\|\bm{\Pi}_jf\|_{L^2(\gamma;\mathcal H)}^2 \to 0.
\end{equation}
By Cauchy--Schwarz and the pointwise bound \(\|f(x)\|_{\mathcal H}\le 1\), we have
\[
\operatorname{sign}(c_k)\,
\langle Af(X),f(X)\rangle_{\mathcal{H}}
\le
\|Af(X)\|_{\mathcal H}.
\]
Taking expectations, it follows that
\begin{align*}
\E_{X\sim \gamma}\|Af(X)\|_{\mathcal H}
&\ge
\operatorname{sign}(c_k)\,
\E_{X\sim \gamma}\left\langle Af(X),f(X)\right\rangle_{\mathcal{H}} \\
&=
\operatorname{sign}(c_k)\sum_{j=0}^\infty c_j\,\|\bm{\Pi}_jf\|_{L^2(\gamma;\mathcal H)}^2 \\
&\ge
|c_k|\cdot\left\|\bm{\Pi}_kf\right\|_{L^2(\gamma;\mathcal H)}^2
-
\left(\sup_{\kappa\geq 0}|c_\kappa|\right)\sum_{j\neq k}\left\|\bm{\Pi}_jf\right\|_{L^2(\gamma;\mathcal H)}^2.
\end{align*}
By \Cref{eq:important}, the above expression converges to $|c_k|$. Since $f$ is feasible, this is also a lower bound for the SDP value. Since this holds for all $k$, this completes the proof.
\end{proof}

\begin{lemma}\label{lem:rotation-invariant}
    Let $A$ be a Hermite projection game and let $\val_A(f,g) = \ip{Af, g}_\gam = \E_{X \sim \gam} (Af)(X)g(X)$.
    Then $\val_A(f,g) = \val_A(f \circ T, g \circ T)$ for all orthogonal matrices $T \in O(n)$ and $f,g: \R^n \to \R$.
\end{lemma}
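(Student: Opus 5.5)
The plan is to show that each projection $\bm{\Pi}_k$ commutes with composition by an orthogonal matrix, and that composition by $T$ preserves the Gaussian inner product. Write $R_T f := f\circ T$.

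\emph{Step 1: $R_T$ is an isometry of $L^2(\gam)$.} Since $\gam$ is rotation invariant, $TX\sim\gam$ when $X\sim\gam$. Hence $\ip{R_Tf,R_Tg}_\gam=\E f(TX)g(TX)=\ip{f,g}_\gam$, and $R_T$ is invertible with inverse $R_{T^{-1}}$. So $R_T$ is orthogonal on $L^2(\gam)$.

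\emph{Step 2: $R_T$ preserves each degree-$k$ Hermite subspace $V_k=\mathrm{span}\{\He_\al:|\al|=k\}$.} I would characterize $V_k$ without coordinates. Let $P_{\le k}$ be the polynomials of total degree at most $k$. Then $V_k$ is the orthogonal complement of $P_{\le k-1}$ inside $P_{\le k}$ in $L^2(\gam)$. This holds because $P_{\le k}=\bigoplus_{j\le k}V_j$, since each $\He_\al$ has degree $|\al|$ with leading monomial $x^\al$. Composition with the linear map $T$ preserves total degree, so $R_T$ maps $P_{\le k}$ onto itself. By Step 1 it is an orthogonal bijection, so it also maps the orthogonal complement of $P_{\le k-1}$ in $P_{\le k}$ onto itself, i.e.\ $R_T V_k=V_k$. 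A unitary operator that preserves a closed subspace (and therefore its complement) commutes with the orthogonal projection onto it. Hence $\bm{\Pi}_k R_T=R_T\bm{\Pi}_k$.

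\emph{Step 3: conclude.} For $A=\sum_k c_k\bm{\Pi}_k$ with bounded $c_k$, the series converges strongly on $L^2(\gam)$, so $AR_T=R_TA$. Therefore
\[
\val_A(f\circ T,g\circ T)=\ip{AR_Tf,R_Tg}_\gam=\ip{R_TAf,R_Tg}_\gam=\ip{Af,g}_\gam=\val_A(f,g).
\]
This applies directly to $f,g\in L^2(\gam)$, which covers bounded $f,g$ and in particular the $\pm1$-valued ones used in $\val$.

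The only real content is Step 2. The tempting alternative is an explicit Hermite expansion of $\He_\al(Tx)$, but the degree-filtration argument avoids that computation. An equally good route would be to note that $\bm{\Pi}_k$ is the $e^{-kt}$-eigenspace projection of the Ornstein--Uhlenbeck semigroup $P_tf(x)=\E f(e^{-t}x+\sqrt{1-e^{-2t}}Z)$. That semigroup visibly commutes with $R_T$ by rotation invariance of $Z$.
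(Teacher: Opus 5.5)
Your proof is correct. Its outer structure matches the paper's: $\gam$ is rotation invariant, so $R_T$ is unitary, and $A$ commutes with $R_T$, which gives $\ip{AR_Tf,R_Tg}_\gam=\ip{Af,g}_\gam$. The difference is in the key step, that $\bm{\Pi}_k$ commutes with $R_T$.

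The paper proves this through the Ornstein--Uhlenbeck semigroup $U_t=\sum_k e^{-kt}\bm{\Pi}_k$, which is rotation-equivariant (citing Bogachev). Hence $R_T$ commutes with $U_t$, preserves each of its eigenspaces, and so commutes with each $\bm{\Pi}_k$. This is exactly the alternative you mention at the end.

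Your main argument instead characterizes $V_k$ as the orthogonal complement of $P_{\le k-1}$ inside $P_{\le k}$. It then uses only two facts: $R_T$ is unitary, and $R_T$ maps each $P_{\le j}$ onto itself. Your route is self-contained and elementary. It needs only that the Hermite polynomials are triangular with respect to monomials and mutually orthogonal, with no spectral decomposition of $U_t$ and no Mehler-type formula. It also spells out why preserving $V_k$ yields commutation with the projection, namely unitarity.

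The paper's route is shorter once the cited facts about $U_t$ are accepted. It also explains the invariance conceptually: the $\bm{\Pi}_k$ are the spectral projections of a single, visibly rotation-equivariant operator.
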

\begin{proof}
We claim that this property holds more generally for operators $A$ which are rotation-equivariant, satisfying $A(f \circ T) = (Af) \circ T$ for all $f:\R^n \to \R$ and $T \in O(n)$.
To prove this,
\begin{align*}
\val_A(f\circ T,g\circ T)
&= \E_{X\sim\gamma} (A(f \circ T))(X)\,g(TX)\\
&= \E_{X\sim\gamma} (Af)(TX)\,g(TX) & (\text{Rotation invariance of }A)\\
&= \E_{X'\sim\gamma} (Af)(X')\,g(X') = \val_A(f,g) & (\text{Rotation invariance of $\gam$})
\end{align*}

To use this for Hermite projection games, we note that each of the operators $\bm{\Pi}_k$ is rotation-equivariant.
One way to prove this is using that the Ornstein--Uhlenbeck noise operator $U_t$ decomposes as $U_t = \sum_{k=0}^\infty e^{-kt} \bm{\Pi}_k$
and the Ornstein--Uhlenbeck operator is rotation-equivariant \cite[Section 1.4]{bogachev1998gaussian}.
Let $R_T$ be the rotation operator $f \mapsto f \circ T$.
Since $R_T$ commutes with $U_t$, it preserves each eigenspace of $U_t$.
Therefore, it preserves the space of degree-$k$ Hermite polynomials, $R_T\bm{\Pi}_k = \bm{\Pi}_kR_T$.
\end{proof}

\begin{lemma}\label{lem:value-stability}
    Let $A$ be a Hermite projection game and $f,\widetilde f, g : \R^n \to \{\pm1\}$.
    Then \[\abs{\val_A(f,g) - \val_A(\widetilde f, g)} \leq \norm{A} \cdot \|f - \widetilde f\|_2.\]
\end{lemma}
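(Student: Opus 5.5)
The plan is to use bilinearity of $\val_A$ in its first argument, followed by Cauchy--Schwarz in $L^2(\gam)$ and the operator-norm bound for $A$ on $L^2(\gam)$. Here $\norm{A}$ is the $L^2(\gam)\to L^2(\gam)$ operator norm, which for $A=\sum_k c_k\bm{\Pi}_k$ equals $\sup_k |c_k|$ by the orthogonal decomposition used in the proof of \cref{lem:sdp-value}. First I will write
\[
\val_A(f,g)-\val_A(\widetilde f,g) = \ip{A f, g}_\gam - \ip{A\widetilde f, g}_\gam = \ip{A(f-\widetilde f), g}_\gam ,
\]
using linearity of $A$. Since $f,\widetilde f$ are bounded, they lie in $L^2(\gam)$, so every term is well defined.

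Next I will apply Cauchy--Schwarz in $L^2(\gam)$:
\[
\abs{\ip{A(f-\widetilde f), g}_\gam} \le \norm{A(f-\widetilde f)}_2\,\norm{g}_2 .
\]
Because $g$ takes values in $\{\pm1\}$, we have $\norm{g}_2 = (\E_{X\sim\gam} g(X)^2)^{1/2} = 1$. The remaining step is the bound $\norm{A h}_2 \le \norm{A}\,\norm{h}_2$ with $h = f-\widetilde f$. This follows from the orthogonality computation $\norm{Ah}_2^2=\sum_k c_k^2\norm{\bm{\Pi}_k h}_2^2\le (\sup_k|c_k|)^2\norm{h}_2^2$, which is exactly the scalar case ($\calH=\R$) of the display in the proof of \cref{lem:sdp-value}. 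Combining these gives the claim.

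No step is a genuine obstacle. The only point needing care is to fix the meaning of $\norm{A}$ as the $L^2(\gam)$ operator (spectral) norm, which equals $\sup_k|c_k|$, and to check that this norm is finite. Finiteness holds by the standing assumption $\sup_k|c_k|<\infty$ in the definition of a Hermite projection game.
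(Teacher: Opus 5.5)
Your proposal is correct and follows the paper's proof: write the difference as $\ip{A(f-\widetilde f), g}_\gam$, apply Cauchy--Schwarz with $\norm{g}_2 = 1$, and bound $\norm{A(f-\widetilde f)}_2$ by the operator norm. Your added remark that $\norm{A} = \sup_k |c_k| < \infty$, via the orthogonal decomposition, is a harmless clarification the paper leaves implicit.
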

\begin{proof}
    By Cauchy--Schwarz,
    \[
    \abs{\E_{X \sim \gam} ((Af)(X)- (A\widetilde f)(X))g(X)}
\le \norm{g}_2\,\norm*{A(f-\widetilde f)}_2
\le \norm{A}\cdot\norm*{f-\widetilde f}_2
    \]
    since $\norm{g}_2 = 1$ and by definition of the operator norm.
\end{proof}

\section{Davie--Reeds game}

The Davie--Reeds game is $A_{DR} = \bm{\Pi}_1 - \lam^* \bI$ where $\lambda^* \approx 0.19748$. The constant $\lambda^*$ is chosen as $\lambda^* = 2C^* \phi(C^*)$ where $C^* \approx 0.25573$ is defined as follows.

\begin{lemma}\label{lem:unique-c}
    There is a unique solution to $4\phi(C)^2 - 4\Phi(-C) + 1 = 0$ for $C \in (0,1)$ which is $C^* \approx 0.25573$
    where $\phi(x) = \frac{1}{\sqrt{2\pi}}e^{-x^2/2}$ and $\Phi(x) = \int_{-\infty}^x \phi(t) dt$.
\end{lemma}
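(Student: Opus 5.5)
Define $h(C) = 4\phi(C)^2 - 4\Phi(-C) + 1$ on $[0,1]$. The plan is to show that $h$ changes sign on $(0,1)$ and that $h$ has at most one zero there; the approximate value of $C^*$ then follows from locating the sign change numerically.

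\emph{Existence.} Evaluate $h$ at the endpoints.
\begin{itemize}
\item At $C=0$ we have $\phi(0)^2 = \frac{1}{2\pi}$ and $\Phi(0)=\frac12$, so $h(0) = \frac{2}{\pi} - 1 < 0$.
\item At $C=1$ we have $\phi(1)^2 = \frac{e^{-1}}{2\pi} \approx 0.0585$ and $\Phi(-1)\approx 0.1587$, so $h(1) \approx 0.234 - 0.635 + 1 > 0$.
\end{itemize}
By continuity and the intermediate value theorem, $h$ has a zero in $(0,1)$.

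\emph{Uniqueness.} Differentiate, using $\phi'(C) = -C\phi(C)$ and $\frac{d}{dC}\Phi(-C) = -\phi(C)$:
\[
h'(C) = -8C\phi(C)^2 + 4\phi(C) = 4\phi(C)\bigl(1 - 2C\phi(C)\bigr).
\]
On $[0,1]$ we have $2C\phi(C) \le 2\phi(0) = \frac{2}{\sqrt{2\pi}} \approx 0.798 < 1$. Hence $h' > 0$ on $[0,1]$, so $h$ is strictly increasing and the zero is unique.

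\emph{Numerical location.} Evaluate $h$ at $0.2557$ and $0.2558$, with rigorous error control on $\Phi$ (for example a Taylor series of $\Phi$ around $0$ with an explicit remainder), and check that the signs differ. This pins $C^*$ to the stated precision.

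The only step needing any care is this last numerical bracketing. It is routine but requires certified evaluation of $\Phi$. Existence and uniqueness follow immediately from the monotonicity computation.
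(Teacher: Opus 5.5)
Your proof is correct and is essentially the paper's own argument: a sign change $h(0)=\frac{2}{\pi}-1<0$, $h(1)\approx 0.5996>0$, plus $h'(C)=4\phi(C)\bigl(1-2C\phi(C)\bigr)>0$ on $[0,1]$. One small point is that bracketing between $0.2557$ and $0.2558$ certifies only four decimals, so to justify $C^*\approx 0.25573$ you would bracket at, say, $0.255725$ and $0.255735$; the paper itself just states the numerical value without certifying it.
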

\begin{proof}
    Let $H(C) := 4\phi(C)^2 - 4\Phi(-C) + 1$. Uniqueness of the root follows from the computations
    \[
        H(0) = \frac{2}{\pi} - 1 < 0\,, \qquad H(1) \approx 0.59958 > 0\,, \qquad H' = 4\phi (1 - 2 C \phi) > 0 \text{ for }C \in [0,1]\,. \qedhere
    \]
\end{proof}

\begin{lemma}\label{lem:dr-optimizers}
    Let $\widetilde f, \widetilde g : \R^n \to \{\pm 1\}$. Then $\widetilde f, \widetilde g$ are Davie--Reeds optimal if and only if there exists $T \in O(n)$ such that, letting $f = \widetilde f \circ T$ and $g = \widetilde g \circ T$:
\begin{enumerate}[(i)]
    \item Up to a set of measure zero,
    \[
        f(X) = \begin{cases}
            1 & X_1 \geq C^*\\
            - g(X) & -C^* < X_1 < C^*\\
            -1 & X_1 \leq -C^*
        \end{cases}
        \qquad g(X) = \begin{cases}
            1 & X_1 \geq C^*\\
            - f(X) & -C^* < X_1 < C^*\\
            -1 & X_1 \leq -C^*
        \end{cases}
    \]
    \item $\bm{\Pi}_1h = 0$ where $h : \R^n \to \{-1,0,1\}$ is the restriction of $f$ to the set $\{x \in \R^n : |x_1| \leq C^*\}$.
\end{enumerate}
\end{lemma}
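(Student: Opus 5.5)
We characterize the optimizers by writing the Davie--Reeds value in a form that splits into independent pointwise terms. For $f,g:\R^n\to\{\pm1\}$ we have $\val_{A_{DR}}(f,g) = \ip{\bm{\Pi}_1 f, \bm{\Pi}_1 g}_\gam - \lam^* \E[fg]$. Write $\bm{\Pi}_1 f(X) = \ip{a, X}$ and $\bm{\Pi}_1 g(X) = \ip{b,X}$, where $a = \E[f(X)X]$ and $b = \E[g(X)X]$. Then
\[
\val_{A_{DR}}(f,g) = \ip{a,b} - \lam^*\E[fg] \le \tfrac12\norm{a}^2+\tfrac12\norm{b}^2 - \lam^*\E[fg],
\]
with equality if and only if $a=b$.

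The key step is to bound $\norm{a}^2 - \lam^*\E[fg]$ (and its symmetric counterpart) using a pointwise inequality. By \cref{lem:rotation-invariant}, we may choose $T$ so that the relevant direction is $e_1$. Write $u=\norm{a}$, so that $\norm{a}^2 = u\,\E[f(X)X_1]$, which is linear in $f$. We would then consider the combined functional
\[
\Phi(f,g) = \tfrac{u}{2}\E[(f+g)X_1] - \lam^*\E[fg].
\]
For each fixed $X$ this is maximized pointwise over $(f(X),g(X))\in\{\pm1\}^2$. Taking $f=g=\sign(X_1)$ gives $u|X_1| - \lam^*$, while taking $f=-g$ gives $\lam^*$. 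The pointwise optimum is therefore the threshold rule: $f=g=\sign X_1$ when $u|X_1|\ge 2\lam^*$, and $f=-g$ otherwise. Self-consistency, namely that $u=\E[fX_1]$ for this rule, should yield the threshold $C$ together with the relation $\lam^* = 2C\phi(C)$ in the intended parametrization.

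There is one further point: on the middle strip, the contribution of $f$ to $\E[fX_1]$ must be accounted for. Here condition (ii) enters. If $\bm{\Pi}_1 h = 0$, the strip contributes nothing to $a$, so $u = 2\phi(C)$. Optimizing the resulting one-parameter value $4\phi(C)^2\cdot\tfrac{1}{?}\dots$ over $C$ gives the stationarity equation $H(C)=0$ of \cref{lem:unique-c}. Uniqueness of $C^*$ then pins down the threshold. This yields the ``if'' direction: any pair satisfying (i) and (ii) attains the maximum value $V(C^*)$, which is computed explicitly.

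For the ``only if'' direction, we would run the chain of inequalities in reverse. Optimality forces equality in $\ip{a,b}\le\frac12(\norm a^2+\norm b^2)$, so $a=b$. We rotate so that $a\parallel e_1$. Equality in the pointwise maximization, except on a null set, forces the structure in (i) with some threshold. Equality in the one-dimensional optimization over the threshold and over the strip contribution to $a$ forces the threshold to be $C^*$ and the strip's linear part to vanish, which is (ii).

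The main obstacle is making this last step rigorous. The pointwise bound depends on $u$, which itself depends on $f$. We need to show that the global maximum over the pair ($u$, strip contribution) is attained only when the strip contribution is zero and $C=C^*$. This calls for a careful one-variable calculus argument, using monotonicity of $H$ from \cref{lem:unique-c}, together with the identity $\lam^*=2C^*\phi(C^*)$.
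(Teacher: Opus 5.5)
There is a genuine gap in the central step. Your functional $\Phi(f,g)=\frac u2\E[(f+g)X_1]-\lam^*\E[fg]$ agrees with the objective in value at a pair with $a=b=ue_1$, but its slope is half the correct one. The first variation of $\ip{a,b}$ (or of $\norm{(a+b)/2}^2$) in $f(X)$ is $uX_1$, not $\frac u2 X_1$.

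As a result, your pointwise rule puts the threshold at $|X_1|=2\lam^*/u$. Self-consistency with $u=2\phi(C)$ then yields $\lam^*=C\phi(C)$, not $2C\phi(C)$. Concretely, the true optimizers have $u=2\phi(C^*)$. For this $u$, the pointwise maximizer of $\Phi$ uses the strip $|X_1|<2C^*$ rather than $|X_1|<C^*$, and its $\Phi$-value is $\approx 0.4975>F(C^*)\approx 0.4786$. So $\Phi$ with $u$ frozen does not upper-bound the game value, and optimal pairs do not maximize it pointwise. There is therefore no chain of inequalities to run backwards for the ``only if'' direction.

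The device that handles the dependence of $u$ on $f$ (the obstacle you flag) is the Fenchel form $s^2=\max_w(2ws-w^2)$:
$\norm{(a+b)/2}^2-\lam^*\E[fg]=\max_{v}\{\E[(f+g)\ip{X,v}]-\norm{v}^2-\lam^*\E[fg]\}$.
For $v=we_1$, its pointwise optimizer has threshold $\lam^*/w$. Optimizing over $w$ then gives $w=2\phi(C)$ and $\lam^*=2C\phi(C)$, with value $F(C)$.

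Your first step has a related defect: $\ip{a,b}\le\frac12(\norm{a}^2+\norm{b}^2)$ is too lossy to start a tight chain. Take $f=\sign(X_1)$ and $g=f\cdot(\bm 1_{|X_1|\ge C^*}-\bm 1_{|X_1|<C^*})$. The right-hand side minus $\lam^*\E[fg]$ is then $\frac1\pi+\frac12(4\phi(C^*)-\sqrt{2/\pi})^2-\lam^*(4\Phi(-C^*)-1)\approx 0.4792>\val(A_{DR})$. So optimality does not force equality there, and $a=b$ does not follow.

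The paper instead uses the sharper $\ip{a,b}\le\norm{(a+b)/2}^2$ (in coordinates, $\sig\mu\le((\sig+\mu)/2)^2$). It then applies $|\E[X_1(f+g)/2]|\le\E[|X_1|\bm 1_S]$, the bathtub step at fixed $\gam(S)$, and the one-variable maximization of $F$. Tightness of each step gives (i). Item (ii) follows from $\bm{\Pi}_1 f=\bm{\Pi}_1 g$ together with the splitting of $f,g$ into a common part off the strip and $\pm h$ on it.

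Note also that $H(C)=0$ is the equation that selects $\lam^*$ by optimizing the ratio $R(C)$. For the fixed game, the threshold is pinned down because $F'(C)=0\iff\lam^*=2C\phi(C)$, which makes $C_-=C^*$ the unique global maximizer of $F$ on $[0,\infty)$.

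The ``if'' direction needs no optimization at all. Items (i) and (ii) give $\bm{\Pi}_1 f=\bm{\Pi}_1 g=2\phi(C^*)X_1$ and $\E[fg]=4\Phi(-C^*)-1$. So the value is $F(C^*)$, which matches the upper bound of \cref{lem:Davie--Reeds}.
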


There are many functions $f,g$ with these two properties, see \Cref{fig:squarewave}.
We will prove \cref{lem:dr-optimizers} in the next section.
For now, we prove that all such functions have a $\bm{\Pi}_3$ component which is $\Omega(1)$.

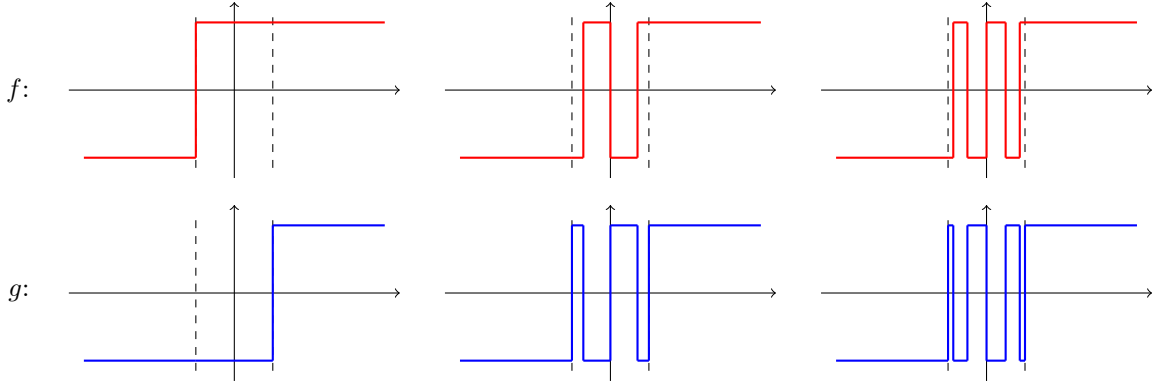
\begin{figure}[th]
\centering
\begin{tikzpicture}[x=2cm,y=0.9cm] 
    \def\C{0.25573}
    \def\Amid{0.18009}
    \def\Amore{0.12708}
    \def\Bmore{0.22101}


    \begin{scope}[shift={(0,0)}]
        \draw[->] (-1.1,0) -- (1.1,0);
        \draw[->] (0,-1.3) -- (0,1.3);

        \draw[dashed] (-\C,-1.15) -- (-\C,1.15);
        \draw[dashed] (\C,-1.15) -- (\C,1.15);

        \draw[red, thick] (-1,-1) -- (-\C,-1);
        \draw[red, thick] (-\C,-1) -- (-\C,1);
        \draw[red, thick] (-\C,1) -- (\C,1);
        \draw[red, thick] (\C,1) -- (1,1);
    \end{scope}

    \begin{scope}[shift={(5cm,0)}]
        \draw[->] (-1.1,0) -- (1.1,0);
        \draw[->] (0,-1.3) -- (0,1.3);

        \draw[dashed] (-\C,-1.15) -- (-\C,1.15);
        \draw[dashed] (\C,-1.15) -- (\C,1.15);


        \draw[red, thick] (-1,-1) -- (-\C,-1);
        \draw[red, thick] (-\C,-1) -- (-\Amid,-1);
        \draw[red, thick] (-\Amid,-1) -- (-\Amid,1);
        \draw[red, thick] (-\Amid,1) -- (0,1);
        \draw[red, thick] (0,1) -- (0,-1);
        \draw[red, thick] (0,-1) -- (\Amid,-1);
        \draw[red, thick] (\Amid,-1) -- (\Amid,1);
        \draw[red, thick] (\Amid,1) -- (\C,1);
        \draw[red, thick] (\C,1) -- (1,1);
    \end{scope}

    \begin{scope}[shift={(10cm,0)}]
        \draw[->] (-1.1,0) -- (1.1,0);
        \draw[->] (0,-1.3) -- (0,1.3);

        \draw[dashed] (-\C,-1.15) -- (-\C,1.15);
        \draw[dashed] (\C,-1.15) -- (\C,1.15);


        \draw[red, thick] (-1,-1) -- (-\C,-1);
        \draw[red, thick] (-\C,-1) -- (-\Bmore,-1);
        \draw[red, thick] (-\Bmore,-1) -- (-\Bmore,1);
        \draw[red, thick] (-\Bmore,1) -- (-\Amore,1);
        \draw[red, thick] (-\Amore,1) -- (-\Amore,-1);
        \draw[red, thick] (-\Amore,-1) -- (0,-1);
        \draw[red, thick] (0,-1) -- (0,1);
        \draw[red, thick] (0,1) -- (\Amore,1);
        \draw[red, thick] (\Amore,1) -- (\Amore,-1);
        \draw[red, thick] (\Amore,-1) -- (\Bmore,-1);
        \draw[red, thick] (\Bmore,-1) -- (\Bmore,1);
        \draw[red, thick] (\Bmore,1) -- (\C,1);
        \draw[red, thick] (\C,1) -- (1,1);
    \end{scope}

    \node[anchor=east] at (-1.3,0) {$f$:};


    \begin{scope}[shift={(0,-3)}]
        \draw[->] (-1.1,0) -- (1.1,0);
        \draw[->] (0,-1.3) -- (0,1.3);

        \draw[dashed] (-\C,-1.15) -- (-\C,1.15);
        \draw[dashed] (\C,-1.15) -- (\C,1.15);

        \draw[blue, thick] (-1,-1) -- (-\C,-1);
        \draw[blue, thick] (-\C,-1) -- (\C,-1);
        \draw[blue, thick] (\C,-1) -- (\C,1);
        \draw[blue, thick] (\C,1) -- (1,1);
    \end{scope}

    \begin{scope}[shift={(5cm,-3)}]
        \draw[->] (-1.1,0) -- (1.1,0);
        \draw[->] (0,-1.3) -- (0,1.3);

        \draw[dashed] (-\C,-1.15) -- (-\C,1.15);
        \draw[dashed] (\C,-1.15) -- (\C,1.15);


        \draw[blue, thick] (-1,-1) -- (-\C,-1);
        \draw[blue, thick] (-\C,-1) -- (-\C,1);
        \draw[blue, thick] (-\C,1) -- (-\Amid,1);
        \draw[blue, thick] (-\Amid,1) -- (-\Amid,-1);
        \draw[blue, thick] (-\Amid,-1) -- (0,-1);
        \draw[blue, thick] (0,-1) -- (0,1);
        \draw[blue, thick] (0,1) -- (\Amid,1);
        \draw[blue, thick] (\Amid,1) -- (\Amid,-1);
        \draw[blue, thick] (\Amid,-1) -- (\C,-1);
        \draw[blue, thick] (\C,-1) -- (\C,1);
        \draw[blue, thick] (\C,1) -- (1,1);
    \end{scope}

    \begin{scope}[shift={(10cm,-3)}]
        \draw[->] (-1.1,0) -- (1.1,0);
        \draw[->] (0,-1.3) -- (0,1.3);

        \draw[dashed] (-\C,-1.15) -- (-\C,1.15);
        \draw[dashed] (\C,-1.15) -- (\C,1.15);


        \draw[blue, thick] (-1,-1) -- (-\C,-1);
        \draw[blue, thick] (-\C,-1) -- (-\C,1);
        \draw[blue, thick] (-\C,1) -- (-\Bmore,1);
        \draw[blue, thick] (-\Bmore,1) -- (-\Bmore,-1);
        \draw[blue, thick] (-\Bmore,-1) -- (-\Amore,-1);
        \draw[blue, thick] (-\Amore,-1) -- (-\Amore,1);
        \draw[blue, thick] (-\Amore,1) -- (0,1);
        \draw[blue, thick] (0,1) -- (0,-1);
        \draw[blue, thick] (0,-1) -- (\Amore,-1);
        \draw[blue, thick] (\Amore,-1) -- (\Amore,1);
        \draw[blue, thick] (\Amore,1) -- (\Bmore,1);
        \draw[blue, thick] (\Bmore,1) -- (\Bmore,-1);
        \draw[blue, thick] (\Bmore,-1) -- (\C,-1);
        \draw[blue, thick] (\C,-1) -- (\C,1);
        \draw[blue, thick] (\C,1) -- (1,1);
    \end{scope}

    \node[anchor=east] at (-1.3,-3) {$g$:};

\end{tikzpicture}
\caption{One-dimensional ``square wave'' examples of Davie--Reeds optimizers. In each example, $f=g=\sign(x)$ outside the strip $[-C^*,C^*]$, while $g=-f$ inside the strip, and the breakpoints are chosen so that $\E_{x \sim \calN(0,1)} xf(x) \bm{1}_{|x| \leq C^*} = 0$.}
\label{fig:squarewave}
\end{figure}

We say that $f,g$ are \emph{Davie--Reeds strip functions}
if there exists $T \in O(n)$
such that $f \circ T$ and $g \circ T$ satisfy item (i) in \cref{lem:dr-optimizers}.

\begin{lemma}\label{lem:he3-lower-bound}
    Suppose that $f, g : \R^n \to \{\pm1\}$ are Davie--Reeds strip functions.
    Then
    \[
        \E_{X \sim \gam} (\bm{\Pi}_3 f)(X) (\bm{\Pi}_3 g)(X) \geq 0.046.
    \]
\end{lemma}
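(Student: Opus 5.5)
Rotating by the $T\in O(n)$ from the definition of Davie--Reeds strip functions, and using \cref{lem:rotation-invariant} (each $\bm{\Pi}_3$ commutes with rotations, so the quantity is rotation invariant), I would assume $f,g$ satisfy item (i) of \cref{lem:dr-optimizers} directly. Write $S=\{x:|x_1|<C^*\}$, $s(X)=\sign(X_1)\bm{1}_{|X_1|\ge C^*}$ and $h=f\bm{1}_S$. Then $f=s+h$ and $g=s-h$, so
\[
\E(\bm{\Pi}_3 f)(\bm{\Pi}_3 g)=\norm{\bm{\Pi}_3 s}_2^2-\norm{\bm{\Pi}_3 h}_2^2 .
\]
The plan is to compute the first term exactly and to upper bound the second term.

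\emph{The term $s$.} Since $s$ depends only on $X_1$, its degree-3 component is $\frac{1}{6}\E[s\He_3(X_1)]\He_3(X_1)$. Using $\int_C^\infty (x^3-3x)\phi(x)\,dx=(C^2-1)\phi(C)$, we get $\E[s\He_3]=2(C^2-1)\phi(C)$, and hence
\[
\norm{\bm{\Pi}_3 s}_2^2=\tfrac{2}{3}(1-C^{*2})^2\phi(C^*)^2\approx 0.0868 .
\]

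\emph{The term $h$.} This is the main step, since $h$ is an arbitrary $\{\pm1\}$-valued function on the strip. I would use duality:
\[
\norm{\bm{\Pi}_3 h}_2=\sup_q \E[hq],
\]
over $q$ in the degree-3 Hermite chaos with $\norm{q}_2=1$. By Cauchy--Schwarz and the support of $h$,
\[
\E[hq]^2\le \E[h^2]\,\E[q^2\bm{1}_S]=p\,\E[q^2\bm{1}_S],\qquad p=\Pr(|X_1|<C^*)=1-2\Phi(-C^*).
\]
By \cref{lem:unique-c}, $p=-4\phi(C^*)^2+2-1-\dots$; concretely $4\Phi(-C^*)=1+4\phi(C^*)^2$ gives $p\approx 0.2018$.

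To bound $\E[q^2\bm{1}_S]$, decompose $q$ along the $X_1$ direction as
\[
q=\sum_{j=0}^3 \frac{\He_j(X_1)}{\sqrt{j!}}\,r_j(X'),
\]
where $r_j$ lies in the degree-$(3-j)$ chaos of $X'=(X_2,\dots,X_n)$. Different $j$ have orthogonal $r_j$'s, and $X_1$ is independent of $X'$, so the cross terms vanish even after multiplying by $\bm{1}_S$. Therefore
\[
\E[q^2\bm{1}_S]=\sum_j m_j\norm{r_j}_2^2\le \max_j m_j,\qquad m_j=\E\Bigl[\tfrac{\He_j(X_1)^2}{j!}\bm{1}_{|X_1|<C^*}\Bigr].
\]
Since $\He_j(x)^2/j!\le 1$ on $|x|\le C^*\approx0.256$ for $j\le 3$ (indeed it is at most $1/2$ for $j=2$ and much smaller for $j=1,3$), the maximum is $m_0=p$. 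This gives
\[
\norm{\bm{\Pi}_3 h}_2^2\le p^2\approx 0.0407 .
\]

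Combining the two terms gives $0.0868-0.0407\ge 0.046$. The only delicate points are the numerical verifications: the value of $C^*$, and checking that $m_j\le m_0$ for $j=1,2,3$, which is elementary since $|\He_j(x)|^2/j!\le 1$ on the strip. If the margin turned out too thin, I would additionally exploit $\bm{\Pi}_0h$ and the constraint $\bm{\Pi}_1h=0$ (the latter holds for actual optimizers but is not assumed for strip functions), so the plan does not rely on them.
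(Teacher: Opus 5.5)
Your proposal is correct and is essentially the paper's proof. It uses the same split $f=s+h$, $g=s-h$ to reduce to $\norm{\bm{\Pi}_3 s}_2^2-\norm{\bm{\Pi}_3 h}_2^2$, and the same bound on the second term: Cauchy--Schwarz restricted to the strip, the Hermite decomposition along $X_1$, and the pointwise bounds $\He_j(x)^2/j!\le 1$ on $|x|\le C^*$, giving $\norm{\bm{\Pi}_3 h}_2^2\le p^2$. Your closed form $\frac{2}{3}(1-C^{*2})^2\phi(C^*)^2$ for the first term is a useful addition the paper only states numerically, since the final margin over $0.046$ is only about $7\cdot 10^{-5}$; and your garbled expression for $p$ should simply read $p=\frac{1}{2}-2\phi(C^*)^2$.
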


\begin{proof}
Applying an orthogonal change of coordinates, which does not change $\E_{X \sim \gam}(\bm{\Pi}_3 f)(X)(\bm{\Pi}_3 g)(X)$ by \cref{lem:rotation-invariant}, we may assume that
$f,g$ are strip functions in the direction $X_1$.
Define
\begin{align*}
u(X)&:=\sign(X_1) \bm{1}_{|X_1| \geq C^*}
\qquad
h(X):=f(X)\bm 1_{|X_1| \leq C^*}\\
f(X)&=u(X)+h(X) \qquad g(X)=u(X)-h(X).
\end{align*}
Applying $\bm{\Pi}_3$, we have
\begin{align*}
\E_{X \sim \gam} (\bm{\Pi}_3 f)(X)(\bm{\Pi}_3 g)(X)
&= \E_{X \sim \gam} ((\bm{\Pi}_3 u)(X) + (\bm{\Pi}_3 h)(X)) ((\bm{\Pi}_3 u)(X) - (\bm{\Pi}_3 h)(X)) \\
&= \E_{X \sim \gam} ((\bm{\Pi}_3 u)(X))^2 - ((\bm{\Pi}_3 h)(X))^2 = 
\|\bm{\Pi}_3 u\|_2^2-\|\bm{\Pi}_3 h\|_2^2. \numberthis \label{eq:he3-split}
\end{align*}
We first compute $\|\bm{\Pi}_3 u\|_2^2$.
Since $u$ depends only on $X_1$, its degree-$3$ Hermite projection is a multiple of $\He_3(X_1)$ only. Thus
\[
\bm{\Pi}_3 u = \widehat u(3,0,\dots,0)\He_3(X_1)
\]
and consequently
\[
\|\bm{\Pi}_3 u\|_2^2 = 6\,\widehat u(3,0,\dots,0)^2.
\]
Expanding, we obtain
\[
6\widehat u(3,0,\dots, 0) = \E_{X\sim\gam} u(X)\He_3(X_1)
=
2\int_{C^*}^{\infty}(x^3-3x) \phi(x)\,dx\,.
\]
For $C^*\approx 0.25573$, this integral numerically evaluates to
\begin{equation}\label{eq:u-lb}
\|\bm{\Pi}_3 u\|_2^2 \approx 0.0868.
\end{equation}
Next we bound $\|\bm{\Pi}_3 h\|_2^2$ from above.
If $\bm{\Pi}_3 h=0$ there is nothing to prove, so assume otherwise.  By Cauchy--Schwarz,
\begin{equation}\label{eq:strip-cs}
\left\|\bm{\Pi}_3 h\right\|_2^2
\le
\|h\|_2^2\,\left\|\bm 1_{|X_1|<C^*} \frac{\bm{\Pi}_3 h}{\|\bm{\Pi}_3 h\|_2} \right\|_2^2
=
\|h\|_2^2\,\left(\E \bm 1_{|X_1|<C^*} \left(\frac{\bm{\Pi}_3 h}{\|\bm{\Pi}_3 h\|_2}\right)^2 \right).
\end{equation}
We will show that
\begin{equation}\label{eq:strip-bound}
\E\bm 1_{|X_1|<C^*} \left(\frac{\bm{\Pi}_3 h}{\|\bm{\Pi}_3 h\|_2}\right)^2\le \E \bm{1}_{|X_1|<C^*}
\end{equation}
Towards \cref{eq:strip-bound}, write the Hermite expansion
\[
\frac{\bm{\Pi}_3 h}{\|\bm{\Pi}_3 h\|_2}(X)=\sum_{i=0}^3 \He_i(X_1)\,r_i(X_2,\dots,X_n),
\]
where each $r_i$ lies in the degree-$(3-i)$ Hermite subspace in the remaining variables. Since $\bm 1_{|X_1|<C^*}$ depends only on $X_1$, orthogonality in $(X_2,\dots,X_n)$ gives
\[
\E\bm 1_{|X_1|<C^*} \left(\frac{\bm{\Pi}_3 h}{\|\bm{\Pi}_3 h\|_2}\right)^2
=
\sum_{i=0}^3 \E\left(\bm 1_{|X_1|<C^*} \He_i(X_1)^2\right)\|r_i\|_2^2 \le
\max_{0\le i\le 3}\frac{\E\bm 1_{|X_1|<C^*} \He_i(X_1)^2}{i!}
\]
where the inequality holds because
\[
1=\left\|\frac{\bm{\Pi}_3 h}{\|\bm{\Pi}_3 h\|_2}\right\|_2^2
=\sum_{i=0}^3 i!\,\|r_i\|_2^2.
\]
Notice that on $|x|\leq C^*<1$,
\[
\frac{\He_0(x)^2}{0!}=1,\qquad
\frac{\He_1(x)^2}{1!}=x^2\le1,\qquad
\frac{\He_2(x)^2}{2!}=\frac{(x^2-1)^2}{2}\le1,\qquad
\frac{\He_3(x)^2}{3!}=\frac{(x^3-3x)^2}{6}<1.
\]
Therefore
\[
\frac{\E\bm 1_{|X_1|<C^*} \He_i(X_1)^2}{i!}
\le \E\bm 1_{|X_1|<C^*}
\qquad (0\le i\le 3).
\]
This proves \cref{eq:strip-bound}.
On the other hand, since $h = \pm 1$ on the strip and $h = 0$ otherwise,
\[\|h\|_2^2 = \E \bm{1}_{|X_1|<C^*} = 2\Phi(C^*) - 1 \approx 0.20184\]
Plugging this into \cref{eq:strip-cs} shows that,
\begin{equation}\label{eq:h-ub}
\norm{\bm{\Pi}_3h}_2^2 \lessapprox (0.20184)^2 \approx 0.0408.
\end{equation}
Combining \cref{eq:he3-split,eq:u-lb,eq:h-ub},
\[
\E (\bm{\Pi}_3 f)(X)(\bm{\Pi}_3 g)(X)
\ge
0.0868-0.0408
\ge
0.046
\]
This proves the lemma.
\end{proof}

\section{Perturbing the Davie--Reeds game}

For $\eps, \lam \in \R$ define the \emph{perturbed Davie--Reeds game} by
\[
    A_{\eps, \lam} = \bm{\Pi}_1 - \lam \bI - \eps \bm{\Pi}_3\,.
\]

The Davie--Reeds game itself uses $\eps = 0$ and $\lam = \lam^*$.
We fix $\lam = \lam^*$ throughout and let $A_{\eps} = A_{\eps, \lam^*}$.

\begin{theorem}[Improvement over Davie--Reeds]\label{thm:perturbed-dr}
    $\val(A_\eps) \leq \val(A_{DR})-\eps(0.046-12(2\eps)^{1/4})$.
\end{theorem}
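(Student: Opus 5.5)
The plan is to reduce the theorem to a \emph{stability} statement for the Davie--Reeds problem and then combine it with \cref{lem:he3-lower-bound} and \cref{lem:value-stability}.

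\textbf{Step 1: reduce to near-optimal pairs.} For $f,g:\R^n\to\{\pm1\}$ write
\[
\val_{A_\eps}(f,g)=\val_{A_{DR}}(f,g)-\eps\,\ip{\bm{\Pi}_3 f,\bm{\Pi}_3 g}_\gam .
\]
Fix a pair $(f,g)$. If $\val_{A_\eps}(f,g)\le \val(A_{DR})-0.046\,\eps$, there is nothing to prove for that pair. Otherwise, since $|\ip{\bm{\Pi}_3 f,\bm{\Pi}_3 g}|\le 1$, we get $\val_{A_{DR}}(f,g)\ge \val(A_{DR})-\delta$ with $\delta:=2\eps$ (indeed $\eps(1+0.046)\le 2\eps$). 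It therefore suffices to show the following for every pair whose Davie--Reeds deficit is at most $\delta$:
\[
\ip{\bm{\Pi}_3 f,\bm{\Pi}_3 g}\ge 0.046-12\,\delta^{1/4}.
\]
Then $\val_{A_\eps}(f,g)\le \val(A_{DR})-\eps(0.046-12(2\eps)^{1/4})$, and taking the supremum gives the theorem.

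\textbf{Step 2: stability lemma.} I would prove that if $\val_{A_{DR}}(f,g)\ge\val(A_{DR})-\delta$, then there are Davie--Reeds strip functions $\widetilde f,\widetilde g$ with
\[
\|f-\widetilde f\|_2+\|g-\widetilde g\|_2\le 12\,\delta^{1/4}.
\]
Given this, the conclusion of Step 1 follows from \cref{lem:he3-lower-bound}. Since $\|\bm{\Pi}_3\|\le1$ and all functions are $\pm1$-valued (so have unit $L^2$ norm),
\[
|\ip{\bm{\Pi}_3 f,\bm{\Pi}_3 g}-\ip{\bm{\Pi}_3\widetilde f,\bm{\Pi}_3\widetilde g}|\le \|f-\widetilde f\|_2+\|g-\widetilde g\|_2 .
\]
This is the same Cauchy--Schwarz estimate as in \cref{lem:value-stability}.

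To prove the stability lemma, write $\bm{\Pi}_1 f=\ip{a,X}$ and $\bm{\Pi}_1 g=\ip{b,X}$, where $a=\E fX$ and $b=\E gX$. Then
\[
\val_{A_{DR}}(f,g)=\ip{a,b}-\lam^*\E fg=\E\, g(X)\bigl(\ip{a,X}-\lam^* f(X)\bigr).
\]
For fixed $f$, this is maximized by $g^\star=\sign(\ip{a,X}-\lam^* f)$. The loss from using $g$ instead is
\[
2\,\E\bigl[\,|\ip{a,X}-\lam^* f|\;\bm 1_{g\ne g^\star}\bigr].
\]
The key margin estimate is that the margin $|\ip{a,X}-\lam^* f|$ is less than $t$ only on a set of Gaussian measure $O(t)$. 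This holds because $\ip{a,X}$ has a bounded density whenever $|a|$ is bounded below, and a near-optimal pair must have $|a|$ bounded below. Hence the disagreement set of measure $m$ costs at least about $m^2$. Consequently $\Pr[g\ne g^\star]\lesssim\delta^{1/2}$, that is $\|g-g^\star\|_2\lesssim\delta^{1/4}$, and symmetrically for $f$.

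Next, use rotation invariance (\cref{lem:rotation-invariant}) to align $a$ with $e_1$. The pair $(f,g)$ is then close to a mutual best-response pair: outside the strip $\{|\ip{a,X}|<\lam^*\}$ both functions equal $\sign(\ip{a,X})$, and inside it $g=-f$. Such a pair is determined by the strip width, by the angle between $a$ and $b$, and by $h=f\bm 1_{\text{strip}}$. The remaining task is a finite-dimensional stability statement for the reduced objective in $(|a|,|b|,\angle(a,b),\text{strip width})$. Its unique maximizer has $a\parallel b$ and width $C^*$; this uses \cref{lem:unique-c} and the nondegeneracy $H'>0$. Quadratic growth of the deficit around this maximizer makes the width error $O(\delta^{1/2})$, and since the strip has bounded density, the mass moved is again $O(\delta^{1/2})$. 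Condition (ii) of \cref{lem:dr-optimizers} is not needed, because \cref{lem:he3-lower-bound} only requires strip functions.

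\textbf{Main obstacle.} The hardest part is making the stability lemma quantitative with the explicit constant $12$. This means tracking explicit lower bounds on $|a|$ and on the curvature of the one-dimensional reduced objective near $C^*$, and combining the $\delta^{1/4}$ losses from the best-response step with those from the width and direction errors. I would first prove the one-dimensional curvature bound numerically-rigorously via \cref{lem:unique-c}, then assemble the constants.
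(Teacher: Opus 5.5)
Your Step 1 and the Cauchy--Schwarz transfer to \cref{lem:he3-lower-bound} are exactly the paper's reduction: split at Davie--Reeds deficit $2\eps$, then apply a stability lemma (the paper's \cref{lem:dr-robust}, which gives strip functions within $6\eta^{1/4}$ in $L^2$ of each of $f$ and $g$). The gap is in your proof of that stability lemma, and that lemma carries all the content.

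Your best-response/margin step is valid, but it only gives $g\approx\sign\ip{a,X}$ off $\{|\ip{a,X}|<\lam^*\}$ and $f\approx\sign\ip{b,X}$ off $\{|\ip{b,X}|<\lam^*\}$. These are two strips, possibly in different directions, of widths $\lam^*/|a|$ and $\lam^*/|b|$. Your claim that both functions equal $\sign\ip{a,X}$ outside a single strip presupposes $b\approx a$ and $|a|\approx\lam^*/C^*$, which is exactly what must be shown. The ``finite-dimensional reduced objective'' that is supposed to deliver this is never written down. As described it is not finite-dimensional either: $a=\E fX$ and $b=\E gX$ depend on the strip contents $h$, and you give no way to remove $h$. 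The nondegeneracy you invoke is also the wrong one. $H'>0$ in \cref{lem:unique-c} concerns the choice of $\lam^*$ optimizing the ratio $R(C)$. At fixed $\lam^*$, with $F(C)=4\phi(C)^2-\lam^*(4\Phi(-C)-1)$, what is needed is $F''(C^*)=8\phi(C^*)^2(C^{*2}-1)<0$ (quantitatively, $F''<-0.49$ on $[0,1/2]$), together with $F(C^*)>\lam^*=\lim_{C\to\infty}F(C)$.

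The missing idea is to work with the agreement set $S=\{f=g\}$ and track slack through the Davie--Reeds chain. Being a strip function is purely a statement about $S$: namely $S=\{|X_1|\ge C^*\}$ and $f=\sign(X_1)$ on $S$. Off $S$ one has $g=-f$ automatically, so $h$ never has to be controlled. After rotating so that $\bm{\Pi}_1 f=\sig X_1$, and with $C$ defined by $\gam(S)=2\Phi(-C)$,
\[
\val_{A_{DR}}(f,g)\le\Bigl(\E X_1\tfrac{f+g}{2}\Bigr)^2-\lam^*\E fg\le\bigl(\E|X_1|\bm 1_S\bigr)^2-\lam^*(2\gam(S)-1)\le F(C)\le F(C^*),
\]
and the total slack is at most $\eta$. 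Split $S=S_+\cup S_-$ by the sign of $\sign(X_1)f$, and write $P=\E|X_1|\bm 1_{S_+}$, $M=\E|X_1|\bm 1_{S_-}$. Then:
\begin{enumerate}
\item The slack of the second inequality is exactly $4PM$. So after a reflection $M\le\sqrt\eta/2$, and hence $\gam(S_-\cap S^*)\le\sqrt\eta/(2C^*)$.
\item The bathtub slack gives $\gam(S\symdiff S_C)\le4\sqrt\eta$ by \cref{lem:bathtub-stability}.
\item The calculus slack gives $|C-C^*|\le3\sqrt\eta$ by \cref{lem:calculus-stability}. Hence $\gam(S\symdiff S^*)\le7\sqrt\eta$.
\end{enumerate}
Now set $f_{DR}=g_{DR}=\sign(X_1)$ on $S^*$ and $f_{DR}=-f$ on $S\setminus S^*$, leaving everything else unchanged. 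This changes $f$ and $g$ only on $(S\symdiff S^*)\cup(S_-\cap S^*)$, and yields strip functions within $6\eta^{1/4}$ in $L^2$. The best-response detour is not needed at all.
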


The main lemma for the proof is a stability estimate for the optimizers
of the Davie--Reeds game.
Let $\val_A(f,g) = \E_{X \sim \gam} (Af)(X)g(X)$.

\begin{lemma}
\label{lem:dr-robust}
    Assume $f, g : \R^n \to \{\pm 1\}$ have $\val_{A_{DR}}(f, g) \geq \val(A_{DR}) - \eta$.
    Then there exist Davie--Reeds strip functions $f_{DR}, g_{DR} : \R^n \to \{\pm 1\}$ such that $\norm{f - f_{DR}}_2 \leq 6\eta^{1/4}$ and $\norm{g - g_{DR}}_2 \leq 6\eta^{1/4}$.
\end{lemma}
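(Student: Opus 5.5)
The plan is to reproduce the Davie--Reeds upper bound on $\val(A_{DR})$ in a form where every inequality has an explicit deficit, and then read off closeness to a strip function from the smallness of the total deficit. Decompose $s=(f+g)/2$ and $d=(f-g)/2$. These take values in $\{-1,0,1\}$ with $|s|+|d|=1$ pointwise, so $\E s^2+\E d^2=1$. Set $a=\E[Xf]$, $b=\E[Xg]$, $m=\E[Xs]$, $m'=\E[Xd]$. Then $\bm{\Pi}_1 f=\ip{a,X}$ and $\bm{\Pi}_1 g = \ip{b,X}$, and
\[
\val_{A_{DR}}(f,g)=\ip{a,b}-\lam^*\E fg = \norm{m}_2^2-\norm{m'}_2^2-\lam^*(2\E s^2-1).
\]
Let $v=m/\norm{m}_2$ and $p=\E s^2$. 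By rotation invariance (\cref{lem:rotation-invariant}) we may take $v=e_1$.

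We then bound the value in three steps, each with an explicit slack.
\begin{enumerate}[(a)]
\item Drop $\norm{m'}_2^2$; this costs a slack $\delta_1=\norm{m'}_2^2\ge 0$.
\item Use the bathtub principle. Among $s$ with values in $\{-1,0,1\}$ and $\E s^2=p$, the quantity $\norm{m}_2=\E[X_1 s]$ is maximized by $s_t=\sign(X_1)\bm 1_{|X_1|\ge t}$, where $t=t(p)$ is chosen so that $\E s_t^2=p$. Denote the slack by $\delta_2=\E[X_1 s_t]-\E[X_1 s]$.
\item The resulting one-variable function $F(t)=(2\int_t^\infty x\phi)^2-2\lam^*(2\Phi(-t))+\lam^*$ is maximized uniquely at $t=C^*$; this is exactly the defining equation in \cref{lem:unique-c}, with $\lam^*=2C^*\phi(C^*)$. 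The slack here is $\delta_3=F(C^*)-F(t)$.
\end{enumerate}
All slacks are nonnegative, and $\delta_1+\delta_3+(\text{a term }\ge 2\norm{m}_2\delta_2-\delta_2^2)\le\eta$, where $\norm{m}_2$ is bounded below by a constant near the optimum. Hence each of $\delta_1$, $\delta_2$, $\delta_3$ is $O(\eta)$.

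Next we convert slacks into $L^2$ distances. For $\delta_3$, I will check that $F''(C^*)<0$ and that $F$ has no other near-maximizers on $[0,\infty)$. This gives $|t-C^*|\lesssim\sqrt{\delta_3}$, and therefore $\norm{s_t-s_{C^*}}_2^2 = 2\,|\Phi(t)-\Phi(C^*)|\lesssim\sqrt\eta$. For $\delta_2$, a quantitative bathtub estimate applies: if $s$ differs from $s_t$ on a set $D$ of measure $\mu(D)$, then $s$ must move mass from the region $|X_1|\ge t$ into the region $|X_1|<t$ (mass is fixed), or flip signs. Because the Gaussian density near $t$ is bounded by $\phi(0)$, the loss in $\E[X_1 s]$ is at least of order $\mu(D)^2$. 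Thus $\norm{s-s_t}_2^2\lesssim\mu(D)\lesssim\sqrt{\delta_2}\lesssim\sqrt\eta$, and combining with the previous bound, $\norm{s-s_{C^*}}_2\lesssim\eta^{1/4}$.

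Finally, we build the strip functions. Set $f_{DR}=g_{DR}=\sign(X_1)$ on $|X_1|\ge C^*$. On $|X_1|<C^*$, set $f_{DR}=f$ (rounded to $\pm1$ if needed) and $g_{DR}=-f_{DR}$. These satisfy item (i) of \cref{lem:dr-optimizers}; item (ii) is not required for strip functions. Where $s=s_{C^*}$, we have $f=f_{DR}$ and $g=g_{DR}$ pointwise. Indeed, if $s=\pm1$ then $f=g=s$; if $s=0$ then $g=-f$. Elsewhere $|f-f_{DR}|\le 2$. Hence $\norm{f-f_{DR}}_2^2\le 4\,\Pr[s\ne s_{C^*}]\le 4\norm{s-s_{C^*}}_2^2$, and the same holds for $g$. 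Tracking constants should give $6\eta^{1/4}$.

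The main obstacle is making the bathtub stability and the strict concavity of $F$ at $C^*$ quantitative with explicit constants small enough to yield $6$. I would first handle the bathtub step through the layer-cake inequality $\E[X_1(s_t-s)]\ge\int_0^{\mu/2}\ldots$, using the bound $\phi\le\phi(0)$.
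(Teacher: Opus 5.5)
Your plan is correct and is essentially the paper's proof: the same chain of slacks, the same two stability estimates (\cref{lem:bathtub-stability} and \cref{lem:calculus-stability}), and the same ``overwrite outside the strip, force $g=-f$ inside'' construction. The one real variation is that aligning $e_1$ with $\E[X(f+g)/2]$ instead of $\bm{\Pi}_1 f$ folds the paper's $S_+/S_-$ sign step, and its reflection, into your bathtub slack, which also gives an $O(\eta)$ rather than the paper's $O(\sqrt{\eta})$ bound on the wrong-sign mass.

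To actually land on the constant $6$, bound $\Pr[s\ne s_{C^*}]\le\Pr[s\ne s_t]+\Pr[s_t\ne s_{C^*}]$ directly, which with those lemmas' constants stays below $6\eta^{1/4}$, instead of going through $\norm{s-s_t}_2+\norm{s_t-s_{C^*}}_2$, which with the same constants only certifies about $7\eta^{1/4}$. Also, $F$ peaks at $C^*$ because $\lam^*=2C^*\phi(C^*)$ with $C^*<1$; the equation in \cref{lem:unique-c} is the optimality condition for the ratio $R(C)$, not for $F$.
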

We can quickly deduce the theorem from the lemma.

\begin{proof}[Proof of \cref{thm:perturbed-dr} from \cref{lem:dr-robust}.]
    Let $f, g : \R^n \to \{\pm 1\}$.
    The value achieved by $f$ and $g$ is
    \begin{align}
        \val_{A_\eps}(f,g)
        &= \E_{X \sim \gam} (\bm{\Pi}_1 f)(X) (\bm{\Pi}_1 g)(X)
           - \lam^* \E_{X \sim \gam} f(X)g(X)
           - \eps \E_{X \sim \gam} (\bm{\Pi}_3 f)(X) (\bm{\Pi}_3 g)(X) \notag\\
        &= \val_{A_{DR}}(f,g) - \eps \E_{X \sim \gam} (\bm{\Pi}_3 f)(X) (\bm{\Pi}_3 g)(X).
        \label{eq:perturbed-game-value}
    \end{align}
    We split into two cases. In the first case
        $\val_{A_{DR}}(f,g) \ge \val(A_{DR}) - 2\eps$.
    By \cref{lem:dr-robust}, there exist Davie--Reeds strip functions
    $f_{DR},g_{DR}$ such that
    \[
        \norm{f-f_{DR}}_2 \le 6(2\eps)^{1/4},
        \qquad
        \norm{g-g_{DR}}_2 \le 6(2\eps)^{1/4}.
    \]
    Using \cref{lem:value-stability} with $A=\bm{\Pi}_3$ twice,
    \begin{align*}
        \abs{\E (\bm{\Pi}_3 f)(X)(\bm{\Pi}_3 g)(X)-\E (\bm{\Pi}_3 f_{DR})(X)(\bm{\Pi}_3 g_{DR})(X)}\le \norm{f-f_{DR}}_2+\norm{g-g_{DR}}_2
        \le 12(2\eps)^{1/4}.
    \end{align*}
    Hence by \cref{lem:he3-lower-bound},
    \[
        \E (\bm{\Pi}_3 f)(X)(\bm{\Pi}_3 g)(X)
        \ge 0.046 - 12(2\eps)^{1/4}.
    \]
    Plugging this into \cref{eq:perturbed-game-value},
    \[
        \val_{A_\eps}(f,g)
        \le \val(A_{DR}) - \eps\left(0.046-12(2\eps)^{1/4}\right).
    \]
    In the second case $\val_{A_{DR}}(f,g) < \val(A_{DR}) - 2\eps$.
    By Cauchy--Schwarz and since $\bm{\Pi}_3$ is an orthogonal projection,
    \[
        \abs{\E_{X \sim \gam} (\bm{\Pi}_3 f)(X) (\bm{\Pi}_3 g)(X)}
        \le \norm{\bm{\Pi}_3 f}_2 \norm{\bm{\Pi}_3 g}_2
        \le \norm{f}_2 \norm{g}_2 = 1.
    \]
    Therefore
    \[
        \val_{A_\eps}(f,g)
        \le \val(A_{DR}) - 2\eps + \eps
        = \val(A_{DR}) - \eps.
    \]
Combining the two cases yields the desired theorem.
\end{proof}

\begin{proof}[Proof of \cref{thm:new-lb}.]
    Combine \cref{lem:sdp-value} with \cref{thm:perturbed-dr} to get
    \[
        K_G \geq \frac{\SDP(A_\eps)}{\val(A_\eps)} \geq \frac{1 - \lam^*}{\val(A_{DR}) - \eps(0.046 - 12 (2\eps)^{1/4})} \geq \frac{1 - \lam^*}{\val(A_{DR})} + \frac{1-\lam^*}{\val(A_{DR})^2} \cdot {\eps(0.046 - 12 (2\eps)^{1/4})}\,.
    \]
    The last inequality is using the convexity of $f(x) = \frac{1 - \lam^*}{\val(A_{DR}) - x}$ at $x=0$.
    Selecting $\eps = 4\cdot 10^{-11}$ is enough to make $0.046 - 12(2\eps)^{1/4} \geq 0.01$.
    Plug in $\lam^* \approx 0.19748$ and $\val(A_{DR}) \approx 0.4786$
    to obtain the lower bound $K_G \geq K_{DR} + 10^{-12}$.
\end{proof}

\subsection{Stability analysis}

Now we prove \cref{lem:dr-robust}.
First, we repeat the analysis of Davie and Reeds of their game.
\begin{lemma}[\cite{davie1984lower, reeds1991new}]\label{lem:Davie--Reeds}
    $\displaystyle\val(A_{DR}) \leq 4\phi(C^*)^2 - \lam^* (4\Phi(-C^*) - 1)$.
\end{lemma}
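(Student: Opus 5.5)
We must show $\val_{A_{DR}}(f,g)\le 4\phi(C^*)^2-\lam^*(4\Phi(-C^*)-1)$ for all $f,g:\R^n\to\{\pm1\}$. The plan is to reduce to the first Hermite coefficients and then optimize pointwise.

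\textbf{Step 1: reduce to two vectors.} Write $a=\E[Xf(X)]$ and $b=\E[Xg(X)]$ in $\R^n$, which are the degree-1 Hermite coefficients. Then
\[
\val_{A_{DR}}(f,g)=\ip{a,b}-\lam^*\E[f g].
\]
Using $\ip{a,b}=\tfrac14(\norm{a+b}^2-\norm{a-b}^2)$ would be awkward. Instead I use the linearized form: for the fixed vectors $a,b$,
\[
\ip{a,b}=\E\left[f(X)\ip{X,b}\right]=\E\left[g(X)\ip{X,a}\right].
\]
Averaging these two expressions gives
\[
\val_{A_{DR}}(f,g)=\E\left[\tfrac12 f\ip{X,b}+\tfrac12 g\ip{X,a}-\lam^* fg\right].
\]

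\textbf{Step 2: pointwise maximization.} For fixed $a,b$, the quantity inside the expectation is at most
\[
\max_{s,t\in\{\pm1\}}\left(\tfrac12 s\ip{X,b}+\tfrac12 t\ip{X,a}-\lam^* st\right),
\]
which depends only on the jointly Gaussian pair $(\ip{X,a},\ip{X,b})$. This reduction is too lossy as it stands, since $f,g$ must also reproduce $a,b$.

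\textbf{Step 3: the cleaner route.} Using rotation invariance (\cref{lem:rotation-invariant}), rotate so that $a+b$ and $a-b$ lie in a two-dimensional span. The symmetric Davie--Reeds trick bounds the value by $\tfrac12\norm{a}^2+\tfrac12\norm{b}^2-\lam^*\E fg$, since $\ip{a,b}\le\tfrac12(\norm a^2+\norm b^2)$. This decouples $f$ and $g$ except through $\E fg$. So it suffices to bound
\[
\norm{a}^2-\lam^*\E[fg]
\]
and the same with $b$. Write $\E fg=1-2\Pr[f\ne g]$. Given the mass $p=\Pr[f\neq g]$, maximize $\norm{\E Xf}+\norm{\E Xg}$. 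By Neyman--Pearson/rearrangement, each $\norm{\E Xf}=\E f\ip{X,e}$ for a unit vector $e$. The disagreement set is then best placed where $|\ip{X,e}|$ is smallest, which is a strip $|X_1|<C$.

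This leads to the one-parameter problem. For $f=g=\sign(X_1)$ outside the strip and $f=-g$ with zero first moment inside, we get $\norm a=\norm b=2\phi(C)$. The value is then
\[
F(C)=4\phi(C)^2-\lam^*\bigl(1-2(2\Phi(C)-1)\bigr)=4\phi(C)^2-\lam^*(4\Phi(-C)-1).
\]
Its derivative vanishes exactly when $\lam^*=2C\phi(C)$, which gives $C=C^*$. One checks that $C^*$ is the global maximum on $C\ge0$ by a calculus argument analogous to \cref{lem:unique-c}.

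\textbf{Main obstacle.} The hard step is justifying the joint optimization. The vectors $a$ and $b$ may point in different directions, and one must show this is never beneficial. My first attempt is the AM--GM decoupling $\ip{a,b}\le\tfrac12(\norm a^2+\norm b^2)$. I would pair it with a rearrangement bound: given $\Pr[f\ne g]=p$, one has
\[
\norm{\E Xf}+\norm{\E Xg}\le 2\cdot 2\phi(C_p),
\]
where $2\Phi(C_p)-1=p$. This follows because $\E f\ip{X,e}+\E g\ip{X,e'}$ is maximized by agreeing with signs except on the lowest-$|\cdot|$ region. Using the concavity of $\phi$-type quantities, I then reduce $\tfrac12(\norm a^2+\norm b^2)$ to the symmetric case $\norm a=\norm b$.
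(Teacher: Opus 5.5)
\textbf{There is a genuine gap, at exactly the step you flagged as the main obstacle.} The AM--GM decoupling $\ip{a,b}\le\tfrac12(\norm{a}^2+\norm{b}^2)$ discards both the sign and the direction relation between $a=\E Xf$ and $b=\E Xg$. After it, the inequality you still need is false. Take $f=\sign(X_1)$ and $g=-f$. Then $\tfrac12(\norm{a}^2+\norm{b}^2)-\lam^*\E fg=2/\pi+\lam^*\approx 0.834$, far above $F(C^*)\approx 0.4786$, so nothing downstream can recover the lemma.

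The same example refutes your rearrangement claim $\norm{\E Xf}+\norm{\E Xg}\le 4\phi(C_p)$. Here $p=\Pr[f\ne g]=1$, so $C_p=\infty$ and the right side is $0$, while the left side is $2\sqrt{2/\pi}$. Also $f=\sign(X_1)$, $g=\sign(X_2)$ is a counterexample at $p=1/2$. Rearrangement only controls a signed quantity along a single common direction, $\E\ip{X,e}(f+g)$. It does not control a sum of two norms whose maximizing directions $e,e'$ can differ.

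Finally, the ``concavity'' reduction to $\norm{a}=\norm{b}$ goes the wrong way. For fixed $\norm{a}+\norm{b}$, the quantity $\tfrac12(\norm{a}^2+\norm{b}^2)$ is minimized, not maximized, at the symmetric point.

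\textbf{The missing idea is the polarization identity you set aside as awkward:}
$\ip{a,b}=\norm{\tfrac{a+b}{2}}^2-\norm{\tfrac{a-b}{2}}^2\le\norm{\E X\tfrac{f+g}{2}}^2$.
\begin{itemize}
\item The function $\tfrac{f+g}{2}$ equals $\pm1$ on $S=\{f=g\}$ and $0$ off it. Hence $\norm{\E X\tfrac{f+g}{2}}=\sup_{\norm{e}=1}\E\ip{X,e}\tfrac{f+g}{2}\le\sup_{\norm{e}=1}\E|\ip{X,e}|\bm{1}_S$.
\item Meanwhile $\E fg=2\gam(S)-1$, so both terms now depend on the single set $S$.
\item The bathtub principle with $\gam(S)=2\Phi(-C)$ then gives $\val_{A_{DR}}(f,g)\le 4\phi(C)^2-\lam^*(4\Phi(-C)-1)=F(C)$.
\end{itemize}
After this, your calculus step applies. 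You should also compare $F(C^*)$ with the limit $\lim_{C\to\infty}F(C)=\lam^*$ and note that the second critical point $C_+>1$ is a local minimum. This is the paper's argument, phrased there as rotating so that $\bm{\Pi}_1 f=\sigma X_1$ and using $\sigma\mu\le(\tfrac{\sigma+\mu}{2})^2$ with $\mu=\E X_1 g$.
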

\begin{proof}
    Recall $A_{DR} = \bm{\Pi}_1 - \lam^* \bI$. We use an arbitrary $\lam \in (0,0.3)$
    in the analysis, explaining how to pick the optimal $\lam = \lam^*$ at the end.

    Let $f, g : \R^n \to \{\pm 1\}$.
    First, rotate $f$ and $g$ so that $\bm{\Pi}_1 f = \sig X_1$ for some $\sig \in \R$.
    This does not change the value of $f,g$ by \cref{lem:rotation-invariant}.
    Let $\mu = \E_{X \sim \gam} X_1 g(X)$.
    By the Cauchy--Schwarz inequality,
    \begin{align}
        \val_{A_{DR}}(f,g) = \sig \mu - \lam \E_{X \sim \gam} f(X) g(X) &\leq \left(\frac{\sig + \mu}{2}\right)^2 - \lam \E_{X \sim \gam} f(X) g(X)\,. \label{eq:cauchy-schwarz}\\
        &=\left(\E_{X \sim \gam} X_1 \left(\frac{f(X) + g(X)}{2}\right)\right)^2 - \lam \E_{X\sim \gam} f(X)g(X) \notag
    \end{align}

    Let $S = \{x \in \R^n : f(x) = g(x)\}$ and let $\gam(S)$ denote its Gaussian measure. Then:
    \begin{align}
        &\left(\E_{X \sim \gam} X_1 \left(\frac{f(X) + g(X)}{2}\right)\right)^2 \leq \left(\E_{X \sim \gam} |X_1| \cdot \bm{1}_{S}(X)\right)^2 \,, \qquad \quad \E_{X\sim \gam} f(X)g(X) = 2\gam(S) - 1\,. \label{eq:x1-correlation}
    \end{align}
    Putting these together in \cref{eq:cauchy-schwarz},
    \begin{align}\label{eq:bathtub-argument}
        \val_{A_{DR}}(f,g) \leq \left(\E_{X \sim \gam} |X_1| \cdot \bm{1}_S(X)\right)^2 - \lam \cdot (2 \gam(S) - 1)\,.
    \end{align}
    
    For fixed $\gam(S)$, the right-hand side is maximized by choosing $S$ to be of the form $S = \{x \in \R^n : |x_1| \geq C\}$.
    This is because, when the measure of $S$ is fixed, we may as well allocate the mass of $S$ to make $|X_1|$ as large as possible.
    This yields the upper bound,
    \begin{align}
        \val_{A_{DR}}(f,g) &\leq \sup_{C \geq 0} \left(\E_{X \sim \gam} |X_1| \cdot \bm{1}_{|X_1| \geq C} \right)^2 - \lam \cdot \left(4\Phi(-C) - 1\right) \notag\\
        &= \sup_{C \geq 0} 4\phi(C)^2 - \lam \cdot (4\Phi(-C)-1) \label{eq:fixed-mean}
    \end{align}
    where $\phi$ and $\Phi$ are the Gaussian PDF and CDF. Single-variable calculus can now be used to solve this optimization problem.

\begin{figure}[ht]
\label{fig:f}
\centering
\begin{tikzpicture}
\begin{groupplot}[
    group style={group size=2 by 1, horizontal sep=2cm},
    width=0.48\textwidth,
    height=0.25\textwidth,
    domain=0:4,
    samples=250,
    xmin=0, xmax=4,
    enlargelimits=false,
    tick label style={font=\tiny},
    axis line style={->, thin},
    axis x line=middle,
    axis y line=left,
    axis background/.style={fill=none},
    clip=false,
    xlabel={$C$},
    xlabel style={at={(axis description cs:0.5,-0.15)}, anchor=north},
    title style={yshift=4pt},
]

\pgfmathsetmacro{\lam}{0.19748}

\pgfmathdeclarefunction{gausspdf}{1}{%
  \pgfmathparse{1/sqrt(2*pi) * exp(-(#1)^2/2)}%
}

\pgfmathdeclarefunction{gausscdf}{1}{%
  \pgfmathparse{
    (#1 < 0) *
    (1 - (1 - 1/(1 + 0.2316419*(-#1))*(0.319381530
      + 1/(1 + 0.2316419*(-#1))*(-0.356563782
      + 1/(1 + 0.2316419*(-#1))*(1.781477937
      + 1/(1 + 0.2316419*(-#1))*(-1.821255978
      + 1/(1 + 0.2316419*(-#1))*1.330274429))))
      * gausspdf(-#1))
    )
    +
    (#1 >= 0) *
    ((1 - 1/(1 + 0.2316419*(#1))*(0.319381530
      + 1/(1 + 0.2316419*(#1))*(-0.356563782
      + 1/(1 + 0.2316419*(#1))*(1.781477937
      + 1/(1 + 0.2316419*(#1))*(-1.821255978
      + 1/(1 + 0.2316419*(#1))*1.330274429))))
      * gausspdf(#1))
    )
  }%
}

\nextgroupplot[
ylabel={$F(C)$},
    ymax=0.55,
    ymin=0.15,
]
\addplot[cyan!60!blue, thick]
{4*gausspdf(x)^2 - \lam*(4*gausscdf(-x) - 1)};

\nextgroupplot[
    ylabel={$F'(C)$},
]
\addplot[cyan!60!blue, thick]
{4*gausspdf(x)*(\lam - 2*x*gausspdf(x))};


\end{groupplot}
\end{tikzpicture}
\caption{Plots of $F(C) := 4\phi(C)^2 - \lam^* (4\Phi(-C) - 1)$ along with its first derivative, for $\lambda^* \approx 0.19748$. The only two roots of $F'$ are $C_- \approx 0.25573$ and $C_+ \approx 2.0582$.}
\end{figure}
    
    Let $F(C) := 4\phi(C)^2 - \lam \cdot (4\Phi(-C) - 1)$.
    Using the properties $\Phi' = \phi$ and $\phi' = -C \phi$, the first two derivatives of $F$ with respect to $C$ are:
    \begin{align}
        F'&=8\phi\phi'+4\lambda \phi \label{eq:f-derivatives}
        =4\phi\bigl(\lambda-2C\phi\bigr)\\
        F''&=-4\lambda C\phi - 16C\phi \phi'-8\phi^2=-4\lambda C\phi+8\phi^2(2C^2-1).\notag
    \end{align}

    Hence $F'(C) = 0$ if and only if $\lam = 2C\phi(C)$.
    By taking derivatives, we see that the function $2C \phi(C)$ is strictly increasing on $(0,1)$ and decreasing on $(1,\infty)$, with maximum value at $C = 1$.
    Therefore, for all $0 < \lam < 2\phi(1) \approx 0.4839$, there are two distinct solutions to the equation $\lam = 2C\phi(C)$ which we denote $C_- \in (0,1)$ and $C_+ \in (1,\infty)$.
    The second derivative evaluated at these critical points satisfies
    \[
        F'' = -4 (2 C \phi) C \phi + 8 \phi^2 (2C^2 - 1) = 8 \phi^2 (C^2 - 1)\,.
    \]
    
    Since $C_-, C_+$ lie on opposite sides of 1, $F''(C_-) < 0$ and $C_-$ is a local maximum, while $F''(C_+) > 0$ and $C_+$ is a local minimum.
    Therefore, the maximum value of $F$ is either $F(C_-)$ or $\lim_{C \to \infty} F(C) = \lam$.
    We have
    \[
    F(C_-) \underset{(F'(0) > 0)}{>} F(0) = \frac{2}{\pi} - \lam \underset{(\lam < \frac{1}{\pi} \approx 0.318)}{>} \lam\,.
    \]
    
    So $F(C_-)$ is the maximum. We obtain the upper bound
    \begin{equation}\label{eq:f-bound}
        \val_{A_{DR}}(f,g) \leq F(C_-) = 4\phi(C_-)^2 - \lam (4\Phi(-C_-) - 1)\,.
    \end{equation}

    This proves the desired upper bound on the value of the Davie--Reeds game.
    To choose $\lam = \lam^*$ in their game, we first choose $C^* = C_- \in (0,1)$ then set $\lam^* = 2C^*\phi(C^*)$ correspondingly.
    $C^*$ is chosen to maximize the integrality gap ratio. The reciprocal of the integrality gap ratio is:
    \begin{align*}
    \frac{\val(A_\lam)}{SDP(A_\lam)} \underset{\text{(\cref{lem:sdp-value}, \cref{eq:f-bound})}}{\leq} \frac{4\phi(C)^2 - \lam(4\Phi(-C) - 1)}{1-\lam} = \frac{4\phi(C)^2 - 2C\phi(C)(4\Phi(-C) - 1)}{1-2C\phi(C)} && (=: R(C))
    \end{align*}
    The derivative with respect to $C$ of this expression is
    \[
        R'(C) = \frac{2(1-C^2) \phi(C)}{(1 - 2 C\phi(C))^2}\left(4 \phi(C)^2 - 4 \Phi(-C) + 1\right)
    \]
    This motivates us to choose $C$ to be a critical point, which satisfies $4\phi(C)^2 - 4\Phi(-C) + 1 = 0$.
\end{proof}

The optimizers of the Davie--Reeds game are characterized by checking when all of the inequalities are tight in the above proof. We prove this next.

\begin{proof}[Proof of \cref{lem:dr-optimizers}.]
    Let $f,g : \R^n \to \{\pm 1\}$ be Davie--Reeds optimizers.
    Rotate $f,g$ so that $\bm{\Pi}_1 f = \sig X_1$.
    \cref{eq:cauchy-schwarz} is tight if and only if
    \begin{equation}\label{eq:x1-coeff-equal}
        \E_{X \sim \gam} X_1 f(X) = \E_{X \sim \gam} X_1 g(X) = \sig\,.
    \end{equation}
    We will use this momentarily.

    Let $S = \{x \in \R^n : f(x) = g(x)\}$.
    \cref{eq:x1-correlation} is tight if and only if $\sign(X_1) = f(X)$ for all $X \in S$, or $\sign(X_1) = -f(X)$ for all $X\in S$ (up to a set of measure zero).
    Without of loss of generality, the former is the case, otherwise
    we apply $T \in O(n)$ which reflects $X_1$.

    The argument from \cref{eq:bathtub-argument} to \cref{eq:fixed-mean}
    is tight if and only if $S = \{x \in \R^n : |x_1| \geq C\}$
    for some $C \in \R$.
    The calculus argument is tight if and only if $C = C^*$.
    We deduce that $f,g$ satisfy item (i) of \cref{lem:dr-optimizers},
    \begin{equation}\label{eq:item-i}
        f(X) = \begin{cases}
            1 & X_1 \geq C^*\\
            -g(X) & -C^* < X_1 < C^*\\
            -1 & X_1 \leq -C^*
        \end{cases} \qquad \qquad
        g(X) = \begin{cases}
            1 & X_1 \geq C^*\\
            -f(X) & -C^* < X_1 < C^*\\
            -1 & X_1 \leq -C^*
        \end{cases}
    \end{equation}

    Towards item (ii), write
    \begin{align*}
    u(X)&:=\sign(X_1) \bm{1}_{|X_1| \geq C^*}
    \qquad
    h(X):=f(X)\bm 1_{|X_1| \leq C^*}\\
    f(X)&=u(X)+h(X) \qquad g(X)=u(X)-h(X)\\
    \implies \bm{\Pi}_1 f& = \bm{\Pi}_1 u + \bm{\Pi}_1 h \qquad \bm{\Pi}_1 g = \bm{\Pi}_1 u - \bm{\Pi}_1 h\,. \numberthis \label{eq:pi1-fg}
    \end{align*}

    We have $\bm{\Pi}_1 f = \sig X_1$ due to the initial rotation, and $\bm{\Pi}_1 u$ is also a multiple of $X_1$
    since $u$ only depends on $X_1$.
    \cref{eq:pi1-fg} implies that $\bm{\Pi}_1 g, \bm{\Pi}_1 h$ are also multiples of $X_1$.
    On the other hand, \cref{eq:x1-coeff-equal} then implies
    \begin{equation}\label{eq:pi1-fg-equal}
        \bm{\Pi}_1 f = \bm{\Pi}_1 g\,.
    \end{equation}
    Combining \cref{eq:pi1-fg,eq:pi1-fg-equal} shows $\bm{\Pi}_1 h = 0$ which proves item (ii).

    Finally, we observe that functions satisfying these properties indeed exist, for example in \cref{fig:squarewave}.
\end{proof}

To turn this into a stability argument,
we start by proving two quantitative stability lemmas.

\begin{lemma}\label{lem:bathtub-stability} \hspace{0.1cm}
        Let $S \subseteq \R^n$ and let $S^* = \{x \in \R^n : |x_1| \geq C\}$ where $C \geq 0$ is chosen so that $\gam(S) = \gam(S^*) = 2\Phi(-C)$.
        If $\E_{X \sim \gam} |X_1| \cdot \bm{1}_S(X) \geq 2\phi(C) - \eps$, then $\gam(S \symdiff S^*) \leq 4\sqrt{\eps}$.
\end{lemma}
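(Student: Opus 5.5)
The plan is a quantitative bathtub argument that directly compares $S$ with $S^*$. The deficit $\E|X_1|\bm{1}_{S^*} - \E|X_1|\bm{1}_S$ should be at least quadratic in the measure of the symmetric difference. That gives $\gam(S\symdiff S^*) = O(\sqrt{\eps})$.

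First, record that $\E_{X\sim\gam}|X_1|\bm{1}_{S^*}(X) = 2\int_C^\infty x\phi(x)\,dx = 2\phi(C)$. The hypothesis then says exactly that the deficit
\[
D := \E|X_1|\bm{1}_{S^*} - \E|X_1|\bm{1}_{S}
\]
satisfies $D \le \eps$.

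Next, let $A := S\setminus S^*$ and $B := S^*\setminus S$. Since $\gam(S) = \gam(S^*)$, we have $\gam(A) = \gam(B) =: m$, and $\gam(S\symdiff S^*) = 2m$. Write
\[
D = \E|X_1|\bm{1}_B - \E|X_1|\bm{1}_A = \E(|X_1|-C)\bm{1}_B + \E(C-|X_1|)\bm{1}_A .
\]
The constants $C\,\gam(B)$ and $C\,\gam(A)$ cancel because the two measures are equal. Both terms are nonnegative. On $B \subseteq S^*$ we have $|X_1|\ge C$, and on $A \subseteq (S^*)^c$ we have $|X_1|<C$.

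The key step is to lower bound each term by $m^2/2$. Consider the random variable $Y = |X_1| - C$ on $B$. The law of $|X_1|$ has density $2\phi(s)\le 2\phi(0) = \sqrt{2/\pi} < 1$. Hence the pushforward of $\gam$ restricted to $B$ under $Y$ is a measure on $[0,\infty)$ of total mass $m$ with density at most $1$. By the one-dimensional bathtub principle, $\int Y$ is minimized when this mass is packed into $[0,m]$, so $\E(|X_1|-C)\bm{1}_B \ge \int_0^m s\,ds = m^2/2$. The same argument applies to $C-|X_1|$ on $A$, whose values lie in $(0,C]$ with density at most $1$, giving $\E(C-|X_1|)\bm{1}_A \ge m^2/2$.

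Combining, $m^2 \le D \le \eps$, so $\gam(S\symdiff S^*) = 2m \le 2\sqrt{\eps} \le 4\sqrt{\eps}$.

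The only step needing care is the bathtub step on $A$ when $m > C$. In that case mass $m$ cannot even fit in the interval $(0,C]$, but this cannot happen: the pushforward mass is at most $\int_0^C 2\phi \le C$, so $m\le C$ automatically. The bound $\int_0^m s\,ds$ is therefore valid. I expect no real obstacle beyond checking this density-bounded rearrangement rigorously, for instance via the layer-cake formula $\E[Y\bm{1}_B] = \int_0^\infty \gam(B\cap\{Y>t\})\,dt \ge \int_0^\infty (m-t)_+\,dt$.
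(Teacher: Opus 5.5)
Your proof is correct. Its first half matches the paper's: both rewrite the deficit as $\E(|X_1|-C)\bm{1}_{S^*\setminus S}+\E(C-|X_1|)\bm{1}_{S\setminus S^*}\le\eps$, cancelling the $C$ terms via $\gam(S\setminus S^*)=\gam(S^*\setminus S)$, and note that both summands are nonnegative.

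The routes diverge in the final quantitative step. The paper merges the two pieces into $\E\,\bigl|C-|X_1|\bigr|\,\bm{1}_{S\symdiff S^*}$ and applies a Markov-type threshold. Points of $S\symdiff S^*$ within distance $t$ of the boundary $|x_1|=C$ have measure at most $2\sqrt{2/\pi}\,t$ by the density bound on $|X_1|$. The remaining points have measure at most the weighted integral divided by $t$. Choosing $t=\sqrt{\eps}$ then gives $(2\sqrt{2/\pi}+2)\sqrt{\eps}$.

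You instead bound each piece separately by a layer-cake (bathtub) rearrangement. Each summand is at least $m^2/2$, because mass with density at most $1$ places at most $t$ within distance $t$ of the boundary.

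Both arguments use the same two ingredients: bounded density of $|X_1|$ and first-moment control. Yours is the sharp form of the paper's Markov estimate. It captures the extremal configuration (thin shells on either side of $|x_1|=C$), has no free parameter to optimize, and gives the better constant $2\sqrt{\eps}$. The paper's version is slightly shorter and never splits into two equal-mass pieces.

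Your extra check that $m\le C$ for the inner piece is correct but not needed. For your set $A=S\setminus S^*$, the inequality $\gam(A\cap\{C-|X_1|>t\})\ge(m-t)_+$ already holds for every $t\ge 0$.
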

\begin{proof}
The assumption can be rewritten as
\[
    \E_{X \sim \gam} |X_1|\cdot \bm{1}_{S^*}(X) - \E_{X \sim \gam} |X_1| \cdot \bm{1}_{S}(X) = \E_{X \sim \gam} |X_1| \cdot \bm{1}_{S^* \setminus S}(X) - \E_{X \sim \gam} |X_1| \cdot \bm{1}_{S \setminus S^*}(X) \leq \eps\,.
\]

Since $S$ and $S^*$ have the same Gaussian measure, so do $S \setminus S^*$ and $S^* \setminus S$.
Adding $\E_{X \sim \gam} C \cdot \bm{1}_{S \setminus S^*}(X) - \E_{X \sim \gam} C \cdot \bm{1}_{S^* \setminus S}(X)  = 0$ to the above equation, we get
\[
    \E_{X \sim \gam} (|X_1| - C) \cdot \bm{1}_{S^* \setminus S}(X) + \E_{X \sim \gam} (C - |X_1|) \cdot \bm{1}_{S \setminus S^*}(X) \leq \eps\,.
\]
Both terms are now non-negative, so each one is at most $\eps$. We conclude that
\[
    \E_{X \sim \gam} \abs{C - |X_1|}\cdot \bm{1}_{S \symdiff S^*}(X) \leq 2\eps\,.
\]

Now for a parameter $t \geq 0$ we bound
\[
    \gam(S \symdiff S^*) \leq \Pr(|C - |X_1|| \leq t) + \frac 1t \E_{X \sim \gam} \abs{C - |X_1|}\cdot \bm{1}_{S \symdiff S^*}(X) \leq 2\sqrt{\frac 2\pi}t + \frac{2\eps}{t}\,.
\]
Choosing $t = \sqrt{\eps}$ we get $\gam(S \symdiff S^*) \leq (2\sqrt{\frac 2\pi} + 2)\sqrt{\eps} \leq 4\sqrt{\eps}$.
\end{proof}

\begin{lemma}\label{lem:calculus-stability}
    Let $F(C) = 4 \phi(C)^2 - \lam^* (4\Phi(-C) - 1)$ and $0 \leq \eps < 0.01$.
    If $F(C) \geq F(C^*) - \eps$ then $|C - C^*| \leq 3 \sqrt{\eps}$.
\end{lemma}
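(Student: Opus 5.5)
The plan is to combine a local quadratic bound near $C^*$ with a global monotonicity argument away from it, using the derivative formulas \cref{eq:f-derivatives} from the proof of \cref{lem:Davie--Reeds} with $\lam=\lam^*$. Set $\delta := 0.3$. Since $\eps<0.01$ gives $3\sqrt{\eps}<0.3=\delta$, it suffices to prove two claims:
\begin{enumerate}[(a)]
    \item on the window $I:=[C^*-\delta, C^*+\delta]$ we have $F(C)\le F(C^*)-\tfrac19 (C-C^*)^2$;
    \item outside $I$ we have $F(C)\le F(C^*)-0.01$.
\end{enumerate}
Given (a) and (b), any $C$ with $F(C)\ge F(C^*)-\eps$ lies in $I$, because otherwise (b) would force $\eps > 0.01$. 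Then (a) gives $(C-C^*)^2\le 9\eps$, which is the claim.

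For (a), I would use Taylor's theorem with Lagrange remainder at $C^*$. Since $F'(C^*)=0$, this reduces to showing $F''(C)\le -\tfrac29$ for all $C\in I\approx[-0.045,0.556]$. I would bound the two terms of $F''=-4\lam^* C\phi+8\phi^2(2C^2-1)$ separately.
\begin{itemize}
    \item On $I$ we have $2C^2-1\le 2(0.556)^2-1<-0.38$ and $\phi(C)\ge\phi(0.556)>0.34$. So the second term is at most $8(0.34)^2(-0.38)\approx -0.35$.
    \item The first term is at most $4\lam^*\cdot 0.045\cdot\phi(0)<0.015$ on the small negative part of $I$, and it is nonpositive for $C\ge 0$.
\end{itemize}
Hence $F''\le -0.33<-\tfrac29$ on $I$ with room to spare. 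As a sanity check, $F''(C^*)=8\phi(C^*)^2((C^*)^2-1)\approx -1.1$.

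For (b), I would use the sign structure of $F'=4\phi(\lam^*-2C\phi)$ established in the proof of \cref{lem:Davie--Reeds}. This gives $F'>0$ on $(-\infty,C^*)$, since $2C\phi<\lam^*$ there, including all $C<0$. It gives $F'<0$ on $(C^*,C_+)$ and $F'>0$ on $(C_+,\infty)$, with $F\to\lam^*$ as $C\to\infty$. Consequently:
\begin{itemize}
    \item for $C\le C^*-\delta$, $F(C)\le F(C^*-\delta)$;
    \item for $C\ge C^*+\delta$, $F(C)\le \max(F(C^*+\delta),\lam^*)$.
\end{itemize}
By (a) applied at the endpoints, $F(C^*\pm\delta)\le F(C^*)-\delta^2/9=F(C^*)-0.01$. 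Also $\lam^*\approx 0.197$ is far below $F(C^*)-0.01\approx 0.47$, since $F(C^*)=\val(A_{DR})\approx 0.4786$ by the value used in the proof of \cref{thm:new-lb}. This settles (b).

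The main obstacle is making the numerical bounds rigorous: the lower bound on $\phi$ and the upper bound on $F''$ over $I$, together with the comparison $F(C^*)-0.01>\lam^*$. These are elementary but need care with the approximate values of $C^*$ and $\lam^*$. If the crude bounds on $F''$ turned out too loose, I would split $I$ into a few subintervals and bound $F''$ on each using the monotonicity of $\phi$ and of $C^2$.
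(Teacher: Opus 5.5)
Your proof is correct and follows the paper's strategy. The sign pattern of $F'$ confines the superlevel set to a neighborhood of $C^*$, and a uniform negative bound on $F''$ there, combined with Taylor's theorem at the critical point $C^*$, yields $|C-C^*|\le 3\sqrt{\eps}$.

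The only real difference is the localization step. The paper traps the superlevel set inside $[0,1/2]$ using the numerical values $F(0)\approx 0.4391$ and $F(1/2)\approx 0.4496$, and then has the sharper bound $F''<-0.49$. You instead fix the window $[C^*-0.3,\,C^*+0.3]$ and reuse your weaker bound $F''\le -2/9$ at its endpoints; this gives exactly the $0.01$ drop that matches the hypothesis $\eps<0.01$, and it also covers $C<0$.
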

\begin{proof}
    Using the derivative sign calculations in the proof of \cref{lem:Davie--Reeds} (see \cref{fig:f}), the superlevel sets of $F$ have the form
    \[
        \{C \geq 0 : F(C) \geq F(C^*) - \eps \} = [C_1, C_2] \cup [C_3, \infty)
    \]
    where $C^* = C_- \in [C_1, C_2]$ and $C_+ \leq C_3$. The maximum value of $F$ in the interval $[C_3, \infty)$ is at most $\lim_{C \to \infty}F(C) = \lam^* \approx 0.19748$ so the latter interval does not appear since $F(C^*) \approx 0.4786$.
    We bound $0 \leq C_1 \leq C_2 \leq 1/2$ using $F(0) \approx 0.4391$ and $F(1/2) \approx 0.4496$.
    
    On the interval $C \in [0,1/2]$ we bound the second derivative by, using \cref{eq:f-derivatives},
    \begin{align}
        F''(C) \leq 8 \phi(C)^2(2C^2 - 1) \leq -4\phi(1/2)^2 = -\frac{2}{\pi}e^{-1/4} < -0.49\,.\label{eq:second-derivative-bd}
    \end{align}

    Using Taylor's theorem with remainder, for every $C \in [C_1, C_2]$ there exists $\xi \in [C_1, C_2]$ such that
    \[
        F(C) = F(C^*) + \frac 12 F''(\xi)(C - C^*)^2\,.
    \]
    By \cref{eq:second-derivative-bd}, we get $F(C^*) -F(C) \geq 0.245 (C - C^*)^2$. Assuming the left side is at most $\eps$, then
    \[|C - C^*|^2 \leq \frac{\eps}{0.245} \leq 9\eps \implies |C - C^*| \leq 3\sqrt{\eps}\,.\qedhere\]
\end{proof}

\begin{proof}[Proof of \cref{lem:dr-robust}.]
Assume that $f,g : \R^n \to \{\pm 1\}$ satisfy $\val_{A_{DR}}(f,g) \geq \val(A_{DR}) - \eps$.
Following the proof of \cref{lem:Davie--Reeds}, we convert $f,g$ into a nearby pair of Davie--Reeds strip functions $f_{DR}, g_{DR}$.

First, apply a rotation so that $\bm{\Pi}_1 f = \sig X_1$.
Let $S = \{x \in \R^n : f(x) = g(x)\}$.
Partition $S = S_+ \cup S_-$ by the value of $\sign(X_1) \left(\frac{f(X) + g(X)}{2}\right)$. Then,
\begin{align*}
    & \quad \left(\E_{X \sim \gam} X_1 \left(\frac{f(X) + g(X)}{2}\right)\right)^2 \\
    &= \left(\E_{X \sim \gam} |X_1| \cdot \bm{1}_{S_+}(X) - \E_{X \sim \gam} |X_1|\cdot \bm{1}_{S_-}(X)\right)^2 \\
    &= \left(\E_{X \sim \gam} |X_1| \cdot \bm{1}_{S_+}(X) + \E_{X \sim \gam} |X_1| \cdot \bm{1}_{S_-}(X)\right)^2 - 4 \left( \E_{X \sim \gam} |X_1| \cdot \bm{1}_{S_+}(X)\right) \left(\E_{X \sim \gam} |X_1| \cdot \bm{1}_{S_-}(X)\right)\\
    &= \left(\E_{X \sim \gam} |X_1| \cdot \bm{1}_{S}(X)\right)^2 - 4 \left( \E_{X \sim \gam} |X_1| \cdot \bm{1}_{S_+}(X)\right) \left(\E_{X \sim \gam} |X_1| \cdot \bm{1}_{S_-}(X)\right)\,.
\end{align*}

Since \cref{eq:x1-correlation} has error at most $\eps$, 
we deduce that
\begin{equation}\label{eq:plus-minus-small}
    4 \left( \E_{X \sim \gam} |X_1| \cdot \bm{1}_{S_+}(X)\right) \left(\E_{X \sim \gam} |X_1| \cdot \bm{1}_{S_-}(X)\right) \leq \eps\,.
\end{equation}
Therefore, one of the inequalities holds:
\begin{equation}\label{eq:s-minus-small}
    \begin{cases}
        \displaystyle\E_{X \sim \gam} |X_1| \cdot \bm{1}_{S_+}(X) \leq \frac{\sqrt{\eps}}{2}\\
        \displaystyle\E_{X \sim \gam} |X_1| \cdot \bm{1}_{S_-}(X) \leq \frac{\sqrt{\eps}}{2}
    \end{cases}
\end{equation}
Without loss of generality, the second inequality holds (else apply $T \in O(n)$ which reflects $X_1$).

Let $S^* = \{x \in \R^n : |x_1| \geq C^*\}$.
Then since $|X_1| \geq C^*$ on $S^*$, we have
\begin{align}
    C^* \gam(S_- \cap S^*) &\leq \E_{X \sim \gam} |X_1| \cdot \bm{1}_{S_- \cap S^*}(X) \leq \E_{X \sim \gam} |X_1| \cdot \bm{1}_{S_-}(X) \underset{\text{\cref{eq:s-minus-small}}}{\leq} \frac{\sqrt{\eps}}{2} \notag\\
    \implies \gam(S_- \cap S^*) &\leq \frac{\sqrt{\eps}}{2C^*}\label{eq:s-minus-size}
\end{align}

Next, let $C \geq 0$ be such that the set $S_C = \{x \in \R^n : |x_1| \geq C\}$ has Gaussian measure $\gam(S_C) = \gam(S) = 2\Phi(-C)$.
The argument from \cref{eq:bathtub-argument,eq:fixed-mean} established that
\[
    \left(\E_{X \sim \gam}|X_1| \cdot \bm{1}_S(X)\right)^2 - \lam \cdot (2\gam(S) - 1) \leq 4\phi(C)^2 - \lam \cdot (4\Phi(-C) - 1) \leq \sup_{C^* \geq 0} 4\phi(C^*)^2 - \lam \cdot (4\Phi(-C^*) - 1)\,.
\]

Furthermore, the loss in both inequalities is at most $\eps$.
Using \cref{lem:bathtub-stability} on the left inequality,
\begin{equation}\label{eq:s-stability}
    \gam(S \symdiff S_C) \leq 4 \sqrt{\eps}\,.
\end{equation}

Using \cref{lem:calculus-stability} on the right inequality,
we have $|C - C^*| \leq 3\sqrt{\eps}$.
Then using $\Phi' = \phi$ we deduce
\begin{equation}\label{eq:alpha-stability}
    \gam(S_C \symdiff S^*) = 2|\Phi(-C) - \Phi(-C^*)| \leq 2 \left(\max_{t \in \R} \phi(t)\right) |C - C^*|  \leq 3\sqrt{\eps}\,.
\end{equation}

Combine \cref{eq:s-stability} and \cref{eq:alpha-stability} to get
\begin{align}\label{eq:modify-2}
\gam(S \symdiff S^*) \leq 7 \sqrt{\eps}
\end{align}

We now define Davie--Reeds strip functions $f_{DR}, g_{DR} : \R^n \to \{\pm 1\}$ by
\[
    f_{DR}(X) = \begin{cases}
        \sign(X_1) & X \in S^*\\
        -f(X) & X \in S \setminus S^*\\
        f(X) & \text{otherwise}
    \end{cases} \qquad 
    g_{DR}(X) = \begin{cases}
        \sign(X_1) & X \in S^*\\
        g(X) & \text{otherwise}
    \end{cases}\,.
\]
By construction, $f_{DR}, g_{DR}$ are Davie--Reeds strip functions.
It can be seen that the modification set of both $f$ and $g$ is contained within $(S \symdiff S^*) \cup (S_- \cap S^*)$.
Using \cref{eq:s-minus-size,eq:modify-2},
we conclude that
\begin{align*}
    \|f - f_{DR}\|_2^2 &= 4 \gam(\{X : f(X) \neq f_{DR}(X)\}) \leq 4 \gam(S \symdiff S^*) + 4\gam(S_- \cap S^*) \leq 4\left(7 + \frac{1}{2C^*}\right)\sqrt{\eps}\,,\\
    \|g - g_{DR}\|_2^2 &= 4 \gam(\{X : g(X) \neq g_{DR}(X)\}) \leq 4 \gam(S \symdiff S^*) + 4\gam(S_- \cap S^*) \leq 4\left(7 + \frac{1}{2C^*}\right)\sqrt{\eps}\,.
\end{align*}
Since $C^* \approx 0.25573$, the right-hand side is at most $36\sqrt{\eps}$, and therefore
\[
    \norm{f - f_{DR}}_2 \leq 6\eps^{1/4}
    \qquad\text{and}\qquad
    \norm{g - g_{DR}}_2 \leq 6\eps^{1/4}\,.\qedhere
\]

\end{proof}

\paragraph{Acknowledgments.} G.M.\ is supported by the European Research Council through an ERC Starting Grant (Grant agreement No.~101077455, ObfusQation) and by the Deutsche Forschungsgemeinschaft (DFG, German Research Foundation) under Germany's Excellence Strategy - EXC 2092 CASA – 390781972.

\small
\bibliographystyle{alpha}
\bibliography{references.bib}

\end{document}